\newtheorem{theorem}{Theorem}
\newtheorem{lemma}{Lemma}[section]
\newtheorem{conjecture}{Conjecture}
\def\beq{ \begin{equation} }
\def\eeq{ \end{equation} }
\def\ep{\epsilon}
\def\square{\vcenter{\vbox{\hrule height .4pt
  \hbox{\vrule width .4pt height 5pt \kern 5pt
        \vrule width .4pt} \hrule height .4pt}}}
\def\RR{\mathbb{R}}
\def\ZZ{\mathbb{Z}}
\begin{document}
\begin{frontmatter}

\title{Phase transitions for a planar quadratic contact process}
\author{Mariya Bessonov\fnref{MB}}
\address{NYC College of Technology, Department of Mathematics, 
300 Jay Street, Brooklyn, NY 11201}
\ead{mbessonov@citytech.cuny.edu}
\author{Richard Durrett\fnref{RD}}
\address{Duke University, Department of Mathematics,
Box 90320, Durham, NC 27708}
\ead{rtd@math.duke.edu}

\fntext[MB]{Partially supported by NSF grant DMS-1515800 and PSC-CUNY grant 67095-00 45.}
\fntext[RD]{Partially supported by NSF grant DMS-1005470.}

\begin{keyword}
quadratic contact process\sep population dynamics\sep stationary distribution
\MSC[2010] primary 60J10\sep secondary 92D25\sep 92D40
\end{keyword}

\begin{abstract}
We study a two dimensional version of Neuhauser's long range sexual reproduction model and prove results
that give bounds on the critical values $\lambda_f$ for the process to survive from a finite set
and $\lambda_e$ for the existence of a nontrivial stationary distribution. Our first result comes from a standard
block construction, while the second involves a comparison with the ``generic population model'' of  \cite{BG-91}. An interesting new feature of our work is the suggestion that, as in the one dimensional contact
process, edge speeds characterize critical values. We are able to prove the following for our quadratic contact process
when the range is large but suspect they are true for two dimensional finite range attractive particle systems that are symmetric with respect
to reflection in each axis. There is a speed $c(\theta)$ for the expansion of the process in each direction.
If $c(\theta) > 0$ in all directions, then $\lambda > \lambda_f$, while if at least one speed is positive, then
$\lambda > \lambda_e$. It is a challenging open problem to show that
if some speed is negative, then the system dies out from any finite set.
\end{abstract}
\end{frontmatter}

\section{Introduction}

The contact process introduced by \cite{Harris-74}, is perhaps the simplest spatial 
model for the spread of a species. A site in $\ZZ^d$ can be occupied ($\xi_t(x)=1$) or vacant ($\xi_t(x)=0$).
Occupied sites become vacant at rate 1, while vacant sites become occupied at rate $\lambda$
times the number of neighbors that are occupied, i.e., the death rate is constant and the birth rate is linear.  
All 0's is an absorbing state. Due to monotonicity,
if we let $\xi^1_t$ be the state of the process at time $t$ when we start with all sites occupied
then $\xi^1_t$ converges to a limit $\xi^1_\infty$, called the upper invariant measure,
which is the largest possible stationary distribution.
For this and the other facts about the contact process, see \citep{Liggett-99}.

Let $\xi^A_t$ be the contact process started with 1's on $A$ and 0 otherwise.
In principle, the contact process could have two critical values:
\begin{itemize}
\item
$\lambda_e = \inf \{ \lambda : \lim_{t\to\infty} P( \xi^1_t(x) = 1 ) > 0 \}$,  
the critical value for the existence of a nontrivial stationary distribution.
\item
$\lambda_f = \inf \{ \lambda : P( \xi^A_t \neq \varnothing \hbox{ for all $t$ }) > 0 \hbox{ for some finite set $A$} \}$.
\end{itemize}
Results of \cite{BezGr-94} imply that for finite range attractive processes,
$\lambda_e \le \lambda_f$. In the case of the contact process, self-duality
$$
P\left( \xi^{\{0\}}_t \not\equiv 0 \right) = P\left( \xi^1_t(0) = 1 \right)
$$
implies that the critical values coincide: $\lambda_e = \lambda_f$, a value we will call $\lambda_c$. 
Let $$\tau^A = \inf \left\{ t : \xi^A_t \equiv 0 \right\}$$ be the time at which the process dies out.
When $\lambda = \lambda_c$, $P\left( \tau^A <\infty \right) = 1$
for all finite sets $A$ and $\xi^1_\infty = \delta_0$. From this, it follows fairly easily that
the phase transition is continuous, i.e., $P\left( \xi^1_\infty(x) = 1\right)$ is a continuous function of $\lambda$.
If $\lambda > \lambda_c$, the complete convergence theorem implies that as $t\to\infty$,
$$
\xi^A_t \Rightarrow P\left( \tau^A < \infty\right) \delta_0 + P\left( \tau^A = \infty\right) \xi^1_\infty.
$$
As a consequence of this result, all stationary distributions have the form
$$\theta \delta_0 + (1-\theta) \xi^1_\infty.$$ The distribution $\delta_0$ is stationary, but is
unstable under perturbation. If we add spontaneous births at rate $\beta$, then there is a unique stationary 
distribution $\xi^{\beta}_\infty$ for the modified contact process, and as $\beta \downarrow 0$ we have
$$P( \xi^\beta_\infty(x) = 1 ) \downarrow P( \xi^1_\infty(x) = 1).$$

In this paper, we will consider a variant of the contact process that has sexual reproduction,
i.e., two individuals are needed to produce a new one. Many processes of this type have been
studied, but many open problems remain. To emphasize that these processes are an important and
natural generalization of the contact process with linear birth rates, we will call them 
quadratic contact processes. We use this term somewhat loosely. In the models we discuss, the birth rate will not always
be a quadratic function of the number of occupied neighbors.

The oldest ``quadratic contact process'' is {\it Toom's NEC model}, see \citep{Toom-74, Toom-80}. However, in its initial
formulation, it was a system in which each site could be in one of two states $\xi_n(x) \in \{ 1, -1\}$. 
Let $\zeta_{n}(x)$ be the majority opinion among $x, x+(0,1), x+(1,0)$ at time $n$. If $\zeta_{n}(x)=+1$, then
$\xi_{n+1}=+1$ with probability $1-p$ and $\xi_{n+1}(x)=-1$ with probability $p$. If $\zeta_n(x)=-1$, then
$\xi_{n+1}=-1$ with probability $1-q$ and $\xi_{n+1}(x)=1$ with probability $q$. Toom proved that
for small $p$ and $q$, there are two nontrivial stationary distributions. Note that in contrast to the Ising
model, there is nonergodicity even when the system is asymmetric. For more on this process, see
\citep{BeGr-85,HJG-90,MSTG-05}. 

Durrett and Gray~\cite{DuGr-85} reformulated Toom's process as a continuous time growth model in which
\begin{align*}
1 \to 0 & \quad\hbox{rate 1} \\
0 \to 1 & \quad\hbox{rate $\lambda$ if $\xi_t(x+(1,0))=\xi_t(x+(0,1))=1$}
\end{align*}
If the initial configuration for this process has no 1's outside a square, then there will never be any births outside the square, so $\lambda_f=\infty$.  
Using the contour method, they proved (in an unpublished work \citep{DuGr-85}, see an announcement of results in \citep{D-85})
\begin{enumerate}[(DG1)]
  \item $\lambda_e \le 110$.
\item If $p < p^* = 1 - p_c$, where $p_c$ is the critical value for oriented bond percolation in $d=2$, then the process starting from product measure with density $p$ dies out. 
\item If $\lambda > \lambda_e$ and $\beta$ is such that $6 \beta^{1/4} \lambda^{3/4} < 1$ then when we add spontaneous births at rate $\beta$ there are two stationary distributions.
\end{enumerate}
(DG2) is easy to prove. If there is an infinite path of 0's starting from a site $x$ in which each step is up or right, then these 0's can never become 1's. Many such paths exist when the density of 0's exceeds $p_c$ for oriented percolation. Indeed, the set of 0's on the line $x+y=k$ will be a discrete time contact process. Starting from a single 0 then with positive probability the discrete time contact process does not die out and when it does not die out the left and right edges grow linearly. Thus the up-right paths will be so numerous that the 1's are trapped in finite sets in between the permanent 0's  and they will die out. The interest in this result is that it shows that the complete convergence theorem is false. The fact that product measures with density $p<p^*$ converge to $\delta_0$ for any $\lambda < \infty$ suggests that this model has a discontinuous phase transition, but proving this is a difficult problem. 

Recently,  \cite{CD-13} have shown that the 
discrete time threshold two contact process on a random $r$-regular 
graph or on a homogeneous tree in which each vertex has degree $r$ 
has a discontinuous phase transition if the degree $r\ge 3$. 
 \cite{VD-13} have used simulation and 
heuristic arguments to study two versions of the quadratic 
contact process on random graphs generated by the configuration model. 
In their vertex centered case, which is the one relevant to this paper, 
they find a discontinuous transition for the Erd\"os-Renyi random graph, 
but on power law random graphs with degree distribution $p_k \sim Ck^{-\alpha}$ and $\alpha < 3$ 
the critical value is 0. When $\lambda_c=0$, a simple argument implies that the transition is continuous.

 \cite{Chen-92, Chen-94} generalized Durrett and Gray's model on $\ZZ^2$. Let $e_1$, $e_2$ be the two unit vectors and define: 
\begin{center}
\begin{tabular}{cccc}
pair 1 & pair 2 & pair 3 & pair 4 \\
$x-e_1, x-e_2$ & $x+e_1, x-e_2$ & $x+e_1, x+e_2$ & $x-e_1, x+e_2$ 
\end{tabular}
\end{center}
Chen's models are numbered by the pairs that can give birth: 
Model I (pair 1 = SW corner rule), Model IIa (pairs 1 and 2), 
Model IIb (pairs 1 and 3), Model III (pairs 1--3), and Model IV (any pair).

\cite{Chen-92} proved for Model IV that if $0 < p < p(\lambda)$, then
$$
P\left( 0 \in \xi^p_t \right) \le t^{-c \log_{2\lambda}(1/p)}.
$$
If we add spontaneous births at rate $\beta$ and let $\xi^{0,\beta}_\infty$ 
be the limit as $t\to\infty$ for the system starting from all 0's (which exists by monotonicity), then for large $\lambda$,
$$
\lim_{\beta\to 0} P\left( 0 \in \xi^{0,\beta}_\infty \right) > 0.
$$
The second result says that the all 0's state is unstable to perturbation. When the limit is 0, the all 0's state is stable to perturbation. In this case, if $\lambda > \lambda_e$, then the perturbed system will have two stationary distributions for small $\beta$. This is how (DG3) was proved. \cite{Chen-94} shows that this is true for Model III.

 \cite{DuNe-94} considered a model with deaths at rate 1, and births at rate $\beta \binom{k}{2}/\binom{2d}{2}$ at vacant sites with $k$ occupied nearest neighbors. The mean field equation, which is derived by assuming that all sites are independent, in this case is
$$
\frac{du}{dt} = - u + \beta (1-u)u^2
$$
This ODE has $\beta_e = 4$ and $\beta_f=\infty$, i.e., there is a nontrivial fixed point for $\beta\ge 4$ but 0 is always locally attracting. 
They showed that in the limit of fast stirring both critical values converged to 4.5. This threshold is the point where the PDE 
$$
\frac{\partial u}{\partial t} = u'' - u + \beta u (1-u)
$$ 
has traveling wave solutions $u(t,x)=w(x-ct)$ with $c>0$. The largest fixed point of the mean field differential equation is 2/3 at 4.5, but based on simulations they conjectured that the phase transition was continuous.

\cite{N-94} considered the contact process with sexual reproduction in $d=1$ with long-range interaction in continuous time. In her model, the spatial locations are $\epsilon\mathbb{Z}$ and $\xi_t^{\epsilon}:\epsilon\mathbb{Z}\to\{0,1\}$ has the following dynamics:

\begin{enumerate}[(i)]
 \item Particles die at rate $1$.
 \item A pair of adjacent particles at $x$ and $x+\epsilon$ produces an offspring with rate $\lambda$, which is sent to a location $y$ with probability $k_{\epsilon}(x-y)$. $k_{\epsilon}$ is the offspring distribution kernel, derived from an exponentially decaying, symmetric probability kernel $k$ on $\mathbb{R}$. 
 \item The birth at $y$ is suppressed if $y$ is occupied.
\end{enumerate}
For this process, she showed that:

\begin{theorem}[{\cite[Theorem 1]{N-94}}]
 In the limit as $\epsilon\to 0$, starting from product measure, the density of particles, $u$, evolves as a solution to the integro-differential equation
 \begin{align}\label{IntegroDiffN}
  \frac{\partial u}{\partial t} = -u + \lambda(1-u)(k*u^2)
 \end{align}
 In addition, \eqref{IntegroDiffN} admits traveling wave solutions, and there is a nondecreasing function $c_k:(0,\infty)\to\mathbb{R}\cup\{\pm\infty\}$ giving the wave speed corresponding to $\lambda$ and $k$.
\end{theorem}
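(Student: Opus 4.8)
The statement has two essentially independent parts --- a hydrodynamic limit for the particle system and a PDE assertion about \eqref{IntegroDiffN} --- and the plan is to treat them in turn.

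\emph{The scaling limit.} I would run the standard long-range averaging program. First fix a graphical representation of $\xi^\epsilon_t$ on $\epsilon\ZZ$: independent rate-$1$ death clocks at the sites, and for each ordered adjacent pair $(x,x+\epsilon)$ and each target $y\in\epsilon\ZZ$ an independent rate-$\lambda k_\epsilon(x-y)$ birth process whose arrivals place a $1$ at $y$ when $x$ and $x+\epsilon$ are occupied and $y$ is vacant. Then, starting from product measure with density profile $u_0$, set $u^\epsilon(t,x)=P(\xi^\epsilon_t(x)=1)$; its evolution couples one-point to two- and three-point functions, producing a BBGKY-type hierarchy. The heart of the matter is \emph{propagation of chaos}, which I would obtain by tracing the ancestry of the event $\{\xi^\epsilon_t(x)=1\}$ backward in time: for this quadratic dynamics each ancestor is a pair of microscopically adjacent sites, so the genealogy is a branching random walk of pairs, with the ``suppression if $y$ occupied'' rule contributing --- in the limit --- the independent thinning factor $1-u$. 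Because $k_\epsilon$ disperses offspring a macroscopic $O(1)$ distance while each lineage occupies an $O(\epsilon)$ neighbourhood, the probability that two distinct lineages ever meet is $O(\epsilon)$, uniformly over a compact time window and over the (non-exploding) tree. Hence even microscopically adjacent sites decouple, the three-point function $P(\xi^\epsilon_t(y)=0,\,\xi^\epsilon_t(x)=\xi^\epsilon_t(x+\epsilon)=1)$ converges to $(1-u)u^2$, integrating against $k_\epsilon\to k$ reproduces the right side of \eqref{IntegroDiffN}, a Gronwall estimate closes the hierarchy, and uniqueness of bounded solutions of \eqref{IntegroDiffN} pins down the limit of $u^\epsilon$. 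A separate equicontinuity/tightness step produces a genuine limit function from the family $(u^\epsilon)$.

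\emph{Traveling waves and the speed function.} Write the nonlocal reaction as $G[u]=-u+\lambda(1-u)(k*u^2)$, and note that its spatially homogeneous version $h(s)=-s+\lambda(1-s)s^2=-\lambda s\big(s^2-s+\tfrac1\lambda\big)$ has the bistable (Nagumo) sign pattern in the relevant regime: for $\lambda>4$ it has roots $0<u_-<\alpha(\lambda)<1$ with $0$ and $\alpha$ stable and $u_-$ unstable. A front is $u(t,x)=w(x-ct)$ with $w$ nonincreasing, $w(-\infty)=\alpha(\lambda)$, $w(+\infty)=0$, solving $-c\,w'=-w+\lambda(1-w)(k*w^2)$. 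I would establish existence together with uniqueness of the admissible speed via the comparison-principle route, which is closest to the particle-system picture: \eqref{IntegroDiffN} enjoys a comparison principle among $[0,1]$-valued solutions --- at a touching point $u(x_0)=v(x_0)$ one has $\lambda(1-u(x_0))\int k\,u^2\le\lambda(1-v(x_0))\int k\,v^2$ since $1-v(x_0)\ge0$ and $k*u^2\le k*v^2$ --- so one starts \eqref{IntegroDiffN} from a Heaviside profile, shows the interface moves with a well-defined asymptotic velocity, and extracts a monotone front and a unique speed $c_k(\lambda)$ by recentering and letting $t\to\infty$, with sub/super-solutions built from the exponential tails at $0$ and at $\alpha$ controlling the convergence. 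For small $\lambda$ (roughly $\lambda\le4$) one has $h<0$ on $(0,1)$, $0$ is globally attracting, no front exists, and $c_k(\lambda)=-\infty$ by convention; the codomain $\RR\cup\{\pm\infty\}$ accommodates this and the expected blow-up $c_k(\lambda)\to+\infty$ as $\lambda\to\infty$. Monotonicity of $\lambda\mapsto c_k(\lambda)$ then follows from the comparison principle: for $\lambda_1<\lambda_2$ the reaction $G$ is pointwise larger, so the $\lambda_2$-flow dominates the $\lambda_1$-flow and hence $c_k(\lambda_2)\ge c_k(\lambda_1)$, with a little care because the upper equilibrium $\alpha(\lambda)$ also moves, so the comparison is run between the $\lambda_1$-front and a suitably shifted sub-solution for the $\lambda_2$-problem.

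\emph{Where the difficulty lies.} For the scaling limit the real work is the non-coalescence estimate for the \emph{quadratic} genealogy: a lineage is not a single walking particle but a branching tree of pairs, so one must simultaneously bound the branching growth and the chance that any two of the many active pairs come within $\epsilon$ of each other, uniformly in $t$ on compacts --- this is exactly what makes the hierarchy close. For the traveling waves the obstruction is that the classical bistable toolkit is unavailable: the profile equation is first-order but \emph{nonlocal}, so there is no phase plane, and there is no compact Sobolev embedding, so monotonicity of the front and uniqueness of $c$ must be extracted from the nonlocal comparison principle and a sliding argument rather than from ODE methods; moreover the sub/super-solution inequalities only close where $G$ is genuinely bistable, which is what pins down the admissible range of $\lambda$.
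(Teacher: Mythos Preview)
The paper does not prove this theorem at all: it is stated as a quotation of Neuhauser's result \cite{N-94}, with no argument supplied. So there is no ``paper's own proof'' to compare against. That said, the paper does prove a closely analogous hydrodynamic limit for its own discrete-time two-dimensional model (Theorem~\ref{HydrodLimit}), and it handles wave speeds via Weinberger's recursive framework \cite{W82}, so it is worth comparing your plan to those.

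\textbf{Hydrodynamic limit.} Your dual/genealogy/propagation-of-chaos route is a genuinely different strategy from what the paper does for its own limit. There the argument is a direct second-moment computation: one partitions space into mesoscopic boxes, replaces the kernel $k_L(\cdot-x)$ by $k_L(\cdot-x^*)$ (with $x^*$ the box corner) so that, conditionally on the previous configuration, the updated occupation variables inside a box are exactly i.i.d., the covariance vanishes identically, and Chebyshev plus a union bound over boxes closes the estimate. A separate coupling lemma shows the modification is harmless. This is shorter and avoids the genealogy entirely, but is tailored to the discrete-time model; your approach is closer in spirit to Neuhauser's continuous-time setting. The delicate step you correctly flag---that microscopically adjacent sites decouple---is exactly what the paper sidesteps by \emph{assuming} $\zeta^L_0\sim u_0^2$ for pairs and then propagating that assumption forward with the same variance trick.

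\textbf{Wave speeds.} Here your plan departs substantially from both the paper and Neuhauser. They do not build fronts via a nonlocal comparison principle and sliding; they invoke Weinberger's order-preserving-recursion machinery, where the speed $c^*$ is defined through the iterates $f_{n+1}=R_{c,\xi}[f_n]=\max\{\psi,\,Q[f_n(\cdot+c)]\}$ and characterized by whether the limit is $\rho_s$ or not. Monotonicity in $\lambda$ then comes for free from monotonicity of $Q$ in $\lambda$, with no need to worry about the moving upper equilibrium. Your comparison-principle sketch is plausible but incomplete: the touching-point inequality you wrote only shows the reaction is cooperative, which is enough for a comparison principle between ordered solutions, but extracting a \emph{unique} speed and a monotone profile from the Heaviside initial data via recentering requires more than you have indicated (in particular, in the nonlocal bistable case the standard sub/super-solution constructions need the kernel's tail and the sign of $\int_0^\alpha h$ in a way you have not addressed). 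The Weinberger route gives existence and monotonicity of $c^*$ with much less work, at the cost of not immediately producing a traveling-wave profile.
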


\begin{theorem}[{\cite[Theorem 3]{N-94}}]
 If $c_k(\lambda)>0$, then for small enough $\epsilon$, there is a nontrivial stationary distribution. Additionally, there is a constant $\lambda^*$ above which the wave speed is indeed strictly positive.
\end{theorem}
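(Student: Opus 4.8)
\emph{Overview, and part one: from positive speed to a nontrivial stationary distribution.}
The argument splits into two independent halves. For the first, the plan is to run a block construction fed by the hydrodynamic limit of the first theorem, in the spirit of \cite{DuNe-94}. Note first that $\xi^\epsilon_t$ is attractive: enlarging the configuration only creates more occupied pairs and hence a larger birth rate, births turn $0$'s into $1$'s, and deaths occur at the fixed rate $1$, so the usual graphical coupling preserves the partial order. Consequently $\xi^{\epsilon,1}_t$, started from all $1$'s, decreases stochastically to an upper invariant measure $\xi^\epsilon_\infty$, and it suffices to prove $P(\xi^\epsilon_\infty(0)=1)>0$. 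Fix $\lambda$ with $c:=c_k(\lambda)>0$ and $\delta\in(0,c)$. Since \eqref{IntegroDiffN} has a traveling wave of speed $c$ — equivalently, any solution of \eqref{IntegroDiffN} started from a large bump $\mathbf 1_{[-L_0,L_0]}$ converges locally uniformly to the nonzero steady state $u_+$ on a region expanding at rate at least $c-\delta$ — we may choose $L,T<\infty$ and a threshold $\theta>0$ so that, for the PDE, density exceeding $\theta$ on an interval of length $2L$ at time $0$ forces density exceeding $2\theta$ at time $T$ on that interval shifted by $\pm(c-\delta)T$. Feeding this into the hydrodynamic limit (which holds from any initial law whose empirical density converges, not only product measure), applied to the process confined to a large finite box and using the near-finite propagation speed of the graphical representation together with attractiveness to decouple well-separated regions, one gets: calling an ``$\epsilon$-block'' \emph{good} when the empirical density of $\xi^\epsilon$ exceeds $\theta$ on the associated interval, a good block makes each of its two forward neighbors good with probability at least $1-\gamma(\epsilon)$, where $\gamma(\epsilon)\to0$ as $\epsilon\to0$. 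Thus good blocks dominate a $1$-dependent oriented site percolation with parameter $1-\gamma(\epsilon)$; taking $\epsilon$ small enough that this exceeds the percolation threshold (the standard block-construction comparison; see \cite{Liggett-99}) and starting from the all-$1$'s row gives $P(\xi^\epsilon_{nT}(0)=1)\ge\theta\,p_{\mathrm{wet}}>0$ for all $n$, with $p_{\mathrm{wet}}$ the probability the relevant comparison site is wet, whence $P(\xi^\epsilon_\infty(0)=1)\ge\theta\,p_{\mathrm{wet}}>0$ by monotonicity in $t$.

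\emph{Part two: producing $\lambda^*$.}
Here I would analyze \eqref{IntegroDiffN} directly for large $\lambda$. Its ``local'' reaction $g_\lambda(u)=-u+\lambda(1-u)u^2$ is bistable once $\lambda>4$, with stable zeros $0$ and $u_+(\lambda)=\tfrac12\bigl(1+\sqrt{1-4/\lambda}\bigr)\to1$; for moderate $\lambda$ the $0$-state can still win, which is precisely why positivity of the speed is a genuine condition. The gain from taking $\lambda$ large is that the bulk saturates near $u_+\approx1$ almost instantly, where the sexual dynamics mimic the ordinary large-$\lambda$ contact process, so the nonzero state ought to invade. I would make this quantitative by constructing a moving sub-solution for \eqref{IntegroDiffN}: a profile sitting slightly below $u_+$ well behind its edge — the small gap absorbing the deficit in $k*u^2$ caused by the kernel's tails, since $g_\lambda'(u_+)<0$ leaves room — and decaying across a transition layer whose advance is powered by the symmetric kernel $k$ transporting mass out of the saturated bulk. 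The local prototype is the explicit front $(1+e^{x/\sqrt2})^{-1}$ of speed $1/\sqrt2$ for $v_t=v_{xx}+v^2(1-v)$; one expects, and the sub-solution confirms, a speed of order $\lambda$ for \eqref{IntegroDiffN}. This yields $c_k(\lambda)>0$ for all sufficiently large $\lambda$, and the monotonicity of $c_k$ from the first theorem lets us take $\lambda^*=\inf\{\lambda:c_k(\lambda)>0\}<\infty$.

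\emph{Main obstacle.}
The hard part is the sub-solution in part two. Because the nonlinearity vanishes to second order at $u=0$, the front is \emph{pushed}, not pulled, so there is no Fisher--KPP-type linearization shortcut for bounding the speed from below; one must exhibit an honest sub-solution, and the nonlocal term $k*u^2$ — which, unlike a Laplacian, resists any elementary pointwise expansion or domination — is exactly what makes the leading-edge estimate delicate, including the need to fit the profile slightly under $u_+$ in the bulk. By contrast, the decoupling in part one is routine: it is the familiar business of running the hydrodynamic limit in large finite boxes and invoking the near-finite propagation speed of the graphical representation together with attractiveness.
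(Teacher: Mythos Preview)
The statement you are addressing is not proved in this paper at all: it is quoted as Neuhauser's result \cite{N-94}, and the paper invokes it only as motivation. The closest thing to a ``paper's own proof'' is the argument for Theorem~\ref{PosSpeeds}, the paper's two-dimensional analogue of the first assertion.

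\emph{Part one.} Your strategy matches the paper's approach to Theorem~\ref{PosSpeeds} essentially line for line. The paper lists the same three ingredients---(i) a hydrodynamic limit, (ii) a convergence theorem for the limiting equation, (iii) a block construction---and executes them via Theorem~\ref{HydrodLimit}, Lemma~\ref{Wlemma}, and Lemma~\ref{induct}. The only cosmetic difference is that the paper appeals directly to Weinberger's Theorem~6.2 in \cite{W82} for step (ii), whereas you phrase it as a property of traveling waves; these are equivalent here. Your remark about using the near-finite propagation speed of the graphical representation to obtain finite-range dependence is exactly the paper's observation that ``the process cannot move by more than distance 1 in one time step, [so] the events involved in the last lemma have a finite range of interaction.'' The percolation comparison you cite from \cite{Liggett-99} is what the paper attributes to Durrett's St.\ Flour notes.

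\emph{Part two.} The paper says nothing about how $\lambda^*<\infty$ is obtained; it simply reports Neuhauser's conclusion. Your sub-solution plan is a reasonable line of attack and is in the spirit of how such thresholds are handled in \cite{N-94} and \cite{W82}, but there is nothing in the present paper to compare it against. I would flag one soft spot: you lean on the monotonicity of $c_k$ from ``the first theorem,'' but the first theorem as stated only asserts that $c_k$ is nondecreasing, not strict positivity on any half-line; your argument already gives $c_k(\lambda)>0$ for all sufficiently large $\lambda$ directly, so the monotonicity is needed only to define $\lambda^*$ as an infimum, which is fine.
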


Neuhauser's result plays an important part in our first proof, but the main motivation for this work came from research done by  
\cite*{Getal1,Getal2,Getal3,Getal4,Getal5}.  
They considered a modification of Model IV in which particles 
hop according to the simple exclusion process at rate $h$. Birth rates at a site are 1/4 times the number of adjacent pairs of occupied sites, while deaths occur at rate $p$. Having $h>0$ means that $p_f(h)>0$. When $h$ is small, $p_f(h) < p_e(h)$, in which case ``both the vacuum and active steady state are stable.'' When $p_e(h) > p_f(h)$, this model has a discontinuous phase transition.

They defined a speed $V(p,h,S)$ using simulation of the process in a strip with slope $S$ and argued that
\begin{align*}
p < p_e(h) & \quad \hbox{ if some $V(p,h,S)>0$}\\
p < p_f(h) & \quad \hbox{ if all $V(p,h,S)>0$}
\end{align*}
It is an interesting problem to define the speeds rigorously 
for a class of attractive finite range process and to prove the 
relationships in the last display. Here, we avoid that problem by 
taking a long range limit to get an integro-differential equation 
for which the existence of speeds can be proved using results of  \cite{W82}.

We consider the two-dimensional contact process with sexual reproduction in discrete time (the results generalize to all higher dimensions as well). 
Let $k:\mathbb{R}^2 \to [0,1]$ be a probability density that is invariant under reflections in either axis: i.e., $k(x_1,x_2)=k(-x_1,x_2)$ and $k(x_1,x_2) = k(x_1,-x_2)$ and has compact support. At time $n \in \{0,1,2,\ldots \}$ the state of a site $x$ on the lattice $\mathbb{Z}^2/L$ is given by $\xi_{n}^{L}(x)$, which can take on the values $1$ (occupied) or $0$ (vacant). Starting with an initial configuration of particles, $\xi_{0}^{L}$ on $Z^2/L$, the process evolves in the following manner:
\begin{enumerate}[(i)]
\item At time $n$, given the configuration at the previous time $n-1$, with probability $\beta$, a vacant site $x$ on the lattice will generate a random variable $U_x$ with density $k$, choose the site $y$ closest to $x+U_x$, and then choose one of its four nearest neighbors $z$ at random. If both of the chosen sites are occupied, $x$ will also become occupied. 
\item After all births have occurred, with probability $\eta$, each particle is killed, independently of the others.
\end{enumerate}
If we take the limit of the particle system as $L\to\infty$ and ignore technical details, the density of 1's at time $n$ should satisfy
\begin{align}\label{IDE}
u_{n+1} = (1 - \eta)\left[u_n + \beta (1- u_n)\left(k*u_{n}^{2}\right)\right].
\end{align}
In the situation in which $u_n(x)\equiv v_n$ 
\begin{align}\label{Iter}
  v_{n+1} = (1 - \eta)\left[v_n + \beta (1- v_n)v_{n}^{2}\right]. 
\end{align}
When $\beta > 4\eta/(1-\eta)$, there are three equilibrium solutions of \eqref{Iter}: $0$ and $\rho_u < 1/2 < \rho_s$ given by
$$
\frac{1}{2} \left( 1 \pm \sqrt{1-\frac{4\eta}{\beta(1-\eta)}} \right)
$$

Using results of  \cite{W82}, we can prove existence of wave speeds under the assumption $\beta > 4\eta/(1-\eta)$. That is, if $S^1$ is the circle of radius one in the plane, then there exists a function $c^*:S^1 \to \mathbb{R}\cup \{-\infty,\infty\}$, which gives the speed of propagation of plane waves solutions of \eqref{IDE}  in the direction $\theta$.

\begin{theorem}\label{PosSpeeds}
If $c^*(\theta)> 0$ for all $\theta\in S^1$, then for large $L$, there is a nontrivial stationary distribution.
\end{theorem}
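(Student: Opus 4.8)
\medskip\noindent
The plan is to run the block construction that is standard for long--range quadratic contact processes (Neuhauser \cite{N-94}, Durrett and Neuhauser \cite{DuNe-94}): for $L$ large we exhibit a set of renormalized space--time cells, on which $\xi^L_n$ has density close to $\rho_s$, that stochastically dominates a supercritical oriented percolation process, and we then read off the stationary distribution from attractiveness. Everything is powered by the hypothesis $c^*(\theta)>0$ for every $\theta$, which makes \eqref{IDE} spread in all directions. Since \eqref{IDE} is of bistable type --- $0$ and $\rho_s$ are stable for \eqref{Iter} while $\rho_u$ is unstable --- the spreading theory behind the definition of $c^*$ (Weinberger \cite{W82}) supplies, for each small $\delta>0$ with $\rho_s-\delta>\rho_u$, an ``ignition'' half--width $A$ and an integer time $T$, both independent of $L$, such that iterating \eqref{IDE} for $T$ steps from $(\rho_s-\delta)\,\mathbf 1_{[-A,A]^2}$ produces a profile that is at least $\rho_s-\delta$ on the enlarged box $[-3A,3A]^2$: the profile converges to $\rho_s$ in the core, while the resulting front, moving at asymptotic speed $c^*(\theta)>0$ in direction $\theta$, has swept past the neighboring cells once $T$ is taken large compared with $A/\min_\theta c^*(\theta)$.

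\medskip\noindent
Second, over the bounded horizon $\{0,1,\dots,T\}$ the particle system follows \eqref{IDE} with high probability when $L$ is large. Given $\xi^L_n$, the values $\xi^L_{n+1}(x)$, $x\in\ZZ^2/L$, are conditionally independent with conditional mean a lattice discretization of the right side of \eqref{IDE} evaluated at the empirical field of $\xi^L_n$; averaging over boxes of mesoscopic side length makes the fluctuations negligible as $L\to\infty$, so a concentration bound gives: for every $\epsilon>0$, if $L$ is large and the box densities of $\xi^L_0$ lie within $\tfrac18\delta$ of $u_0$, then with probability at least $1-\epsilon$ the box densities of $\xi^L_n$ lie within $\tfrac18\delta$ of $u_n$ for all $n\le T$. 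The one substantive point is the $k*u^2$ (rather than $k*u$) in \eqref{IDE}: a birth at a vacant $x$ requires the kernel target $y\approx x+U_x$ \emph{and} a uniformly chosen nearest neighbor $z$ of $y$ to be occupied, and because $U_x$ is spread over an $O(1)$ region while $z$ lies within $1/L$ of $y$, two microscopically close sites are asymptotically uncorrelated under the dynamics --- a birth links $x$ only to a macroscopically distant pair, deaths are independent, and after one step one has i.i.d.\ thinning --- so $P(\xi^L_n(y)=\xi^L_n(z)=1)$ equals the square of the local density up to $o(1)$. Propagating this local ``chaos'' together with the box--density estimate through the $T$ steps, uniformly in $L$ large, is exactly the content of the corresponding step in \cite{N-94, DuNe-94}.

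\medskip\noindent
Granting these two facts the block construction is routine. Start $\xi^L_n$ from all $1$'s; a bounded number of steps, during which the system tracks \eqref{Iter} and the density is driven to $\rho_s$, leaves every box density $\ge\rho_s-\delta$, and we call this renormalized time $0$. Tile space by cells of side $A$ and time by steps of length $T$, and declare a space--time block \emph{good} if the law-of-large-numbers event of the second step occurs on it; a block is good with probability $\ge1-\epsilon$, and $\epsilon\downarrow0$ as $L\to\infty$. By the first step, if a block is good and the density of $\xi^L_n$ is $\ge\rho_s-\delta$ on its spatial cell, then $T$ steps later the density is $\ge\rho_s-\delta$ on each neighboring cell. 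Hence the set of cells carrying density $\ge\rho_s-\delta$ dominates oriented site percolation with parameter $1-\epsilon$, which is supercritical and percolates for $L$ large (the comparison theorem for oriented percolation, as used in \cite{DuNe-94}). Therefore $\inf_n P(\xi^L_n(x)=1)>0$; since the dynamics are attractive and we started from the maximal configuration, the laws of $\xi^L_n$ decrease stochastically to the upper invariant measure $\nu_L$, and the bound forces $\nu_L(\xi(x)=1)>0$, i.e.\ $\nu_L$ is a nontrivial stationary distribution.

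\medskip\noindent
The main obstacle is the second step, and within it the local chaos estimate: one must show that the microscopic independence making $k*u^2$ the correct limiting nonlinearity is produced after one step (by i.i.d.\ deaths) and then preserved by the dynamics, with errors controlled uniformly over the $T$ steps of a block and uniformly in all $L$ large enough. The Weinberger spreading input, the oriented--percolation comparison, and the passage to the upper invariant measure are by contrast routine.
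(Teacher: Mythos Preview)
Your approach is essentially the paper's: the same three ingredients---Weinberger's spreading result for \eqref{IDE} when all $c^*(\theta)>0$, a hydrodynamic limit over a bounded time horizon (with the pair/``local chaos'' condition $\zeta^L_n\sim u_n^2$ handling the $k*u^2$ nonlinearity exactly as you describe), and a block construction dominating supercritical oriented percolation---assembled in the same order. The only cosmetic differences are that the paper takes the ``good'' threshold to be $\rho_u+2\delta$ rather than $\rho_s-\delta$ and runs a one-dimensional chain of square blocks $I_n=2nK+[-K,K]^2$ rather than a full two-dimensional tiling, neither of which matters.
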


The proof of this result is based on  \citep{N-94}. There are three ingredients:
\begin{enumerate}[(i)]
\item a hydrodynamic limit, which shows that as $L\to\infty$ the particle system converges to a solution of \eqref{IDE}, 
\item a convergence result of \cite{W82} for solutions of \eqref{IDE}, and 
\item a block construction.
\end{enumerate}
Our symmetry assumption implies that if the speed in some direction $(\theta_1,\theta_2)$ 
is positive then there are a total of four directions obtained by
reflection across the axes that also have positive speeds. From this, we see that if 
a large region of 0's develops then the process will be able to fill in this hole and hence there should be a stationary distribution.
This intuition can be made rigorous by using a comparison with a ``generic population process'' of  \cite{BG-91}.

\begin{theorem}\label{NegSpeed}
If there exists a $(\theta_1,\theta_2)\in S^1$ 
and $c^*(\theta) > 0$, then for large $L$, 
there is a nontrivial stationary distribution.
\end{theorem}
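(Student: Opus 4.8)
The plan is to retain two of the three ingredients behind Theorem~\ref{PosSpeeds}---the hydrodynamic limit that sends the particle system to \eqref{IDE} as $L\to\infty$, and the front-propagation theory of Weinberger \cite{W82} that produces the plane-wave speeds $c^*(\theta)$---but to replace the block event ``a large square of density near $\rho_s$ expands in every direction,'' which is no longer available under the weaker hypothesis, by a statement of the form ``the $1$-phase re-invades any bounded region of $0$'s.'' The latter is exactly the kind of input that the generic population process of Bramson and Gray \cite{BG-91} converts into the existence of a nontrivial upper invariant measure.

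First I would record the consequences of symmetry. Since $k$ is invariant under reflection in each coordinate axis, so is the iteration \eqref{IDE}, and hence $c^*(\pm\theta_1,\pm\theta_2)=c^*(\theta_1,\theta_2)$; the hypothesis therefore furnishes four unit vectors $\theta^{(1)},\dots,\theta^{(4)}$, one in the interior of each quadrant, with $c:=c^*(\theta^{(i)})>0$ for all $i$. The geometric heart of the proof is a hole-filling statement: for every $R$ there are a finite time $T(R)$ and a level $\rho\in(\rho_u,\rho_s)$ such that, for $L$ large, the particle system started from density at least $\rho$ outside the ball $B_R$ (and arbitrary inside it) has, with probability tending to $1$, density at least $\rho$ on all of $B_{2R}$ throughout $[T(R),2T(R)]$. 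To prove it, use attractiveness of the particle system to bound the process from below by the superposition of four processes, the $i$-th started from density near $\rho_s$ on the half-plane $H_i=\{x:\theta^{(i)}\cdot x\ge R+1\}\subset B_R^{c}$ and from $0$ elsewhere. Over the fixed horizon $2T(R)$ the $i$-th process tracks the solution of \eqref{IDE} from the corresponding half-plane datum (hydrodynamic limit), whose ``$\ge\rho$'' level set overtakes $\{x:\theta^{(i)}\cdot x\ge R+1-ct\}$ (wave-speed description of that solution). The union over $i$ of these advancing half-planes has complement contained in the rhombus $\{a|x_1|+b|x_2|<R+1-ct\}$, where $\theta^{(1)}=(a,b)$, which is empty once $ct>R+1$; choosing $T(R)=\lceil 2(R+1)/c\rceil$ gives the claim.

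With the hole-filling statement in hand I would invoke the renormalization of Bramson and Gray \cite{BG-91}, whose generic population process is designed to turn exactly such a statement---the $1$-phase re-invades any bounded hole of $0$'s within a time controlled by the diameter of the hole---into the conclusion that the process started from all $1$'s does not converge to the all-$0$'s configuration. Concretely one calls a box of side comparable to $R$ in $\mathbb{Z}^2/L$ \emph{healthy} at a renormalization time when the smoothed empirical density of $\xi^L_n$ is at least $\rho$ on it and on a surrounding collar; the hole-filling statement, applied on space-time blocks of the appropriate size, shows that healthiness of the collar at the start of a block forces healthiness throughout a slightly smaller box at the end of the block, with probability as close to $1$ as desired for $L$ large; comparing the resulting dependent percolation with a supercritical oriented percolation yields $\lim_{n\to\infty}P(\xi^L_n(x)=1)>0$, so $\xi^{1,L}_\infty$ is nontrivial, which is the assertion of the theorem.

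The step I expect to be the main obstacle is the reconciliation of scales and the order of limits between the last two paragraphs: the hole-filling time $T(R)$ grows with $R$, whereas the hydrodynamic approximation is only uniform over a time horizon fixed before $L\to\infty$, so one must first fix $R$ (equivalently the renormalization block scale), then the tolerance $\rho_s-\rho$ and the amount of guaranteed enlargement, and only afterwards let $L\to\infty$; one must also check that the quantitative ``healthy collar implies healthy core with high probability'' statement produced this way is literally in the form demanded by the generic population model of \cite{BG-91}, and that the shape and location of the healthy region are controlled uniformly from one renormalization time to the next, so that successive block events concatenate. A secondary technical nuisance is that \eqref{IDE} need not be order-preserving on all of $[0,1]$, which is why the comparison in the hole-filling argument is carried out at the particle level, where attractiveness genuinely holds, with \eqref{IDE} invoked only for the spreading rate of a single half-plane front.
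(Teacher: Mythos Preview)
Your high-level strategy---symmetry gives several directions of positive speed, and this lets you feed the particle system into the Bramson--Gray comparison process---matches the paper's, but the execution differs in an important way, and your version has a real gap at the point where you write ``invoke the renormalization of Bramson and Gray.''

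The paper does \emph{not} first prove a single-hole-filling lemma and then cite \cite{BG-91} as a black box. The Bramson--Gray framework is quite specific: one must supply a point process of errors satisfying their conditions (2--1) and (2--2), orientation vectors, inward edge speeds, and an interaction rate; one then couples the ``bad region'' of one's process inside the vacant region of theirs. The paper does exactly this. It takes \emph{three} directions $\xi_1,\xi_2,\xi_3$ (normals to an acute triangle, obtained from the one given direction via symmetry and lower semicontinuity of $c^*$), builds from Weinberger's increasing iterates $f_n = R_{c,\xi}^{\,n}[\psi]$ a single profile $\phi$ with $\phi(s-c)\le Q[\phi(x\cdot\xi_i)](s)$ for every $i$, and declares a box \emph{bad} when its density drops below $\alpha=\phi(-\infty)$. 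The crucial device you are missing is the definition of \emph{Type~II errors}: inside existing bad triangles the density is required to stay above the envelope $h_n(x)=\max_j\inf_i Q^{\,n-m_i}[\phi(\xi_j\cdot(x-y_i))]$; a Type~II error is a box whose density falls below this envelope. With this definition, each edge of a bad triangle provably moves inward at speed $c$ \emph{edge-by-edge}, regardless of what other bad triangles are present; collisions are absorbed by the explicit interaction rate $b=2d(k)$. The error probabilities (both Type~I and Type~II) are then bounded by a single-box Chebyshev estimate of order $L^{2\gamma-2}$, which verifies the Bramson--Gray point-process conditions.

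Your hole-filling lemma, by contrast, assumes density at least $\rho$ \emph{everywhere} outside $B_R$, i.e., a single isolated hole. In the actual evolution there are many bad regions, possibly within distance $d(k)$ of one another, and your lemma says nothing about an edge of one bad region retreating while another bad region sits nearby. The paper's $\phi$-envelope is exactly what bridges this: it gives a one-dimensional lower barrier that depends only on the signed distance to one edge, so the motion of that edge is decoupled from the rest of the configuration as long as no Type~II error occurs. Without an analogue of the Type~II error and the associated subsolution profile, it is not clear how to go from ``a single hole closes'' to the edge-speed and interaction-rate inputs that \cite{BG-91} actually requires, and your concluding paragraph candidly flags this as unresolved. (Incidentally, your worry that $Q$ may fail to be order-preserving is unfounded here: for $0\le u\le v\le 1$ one checks $(1-v)(k*v^2)-(1-u)(k*u^2)\ge -(v-u)$ using $\beta\le 1$, so $Q[v]\ge Q[u]$.)
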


Our application is more complicated than their proof of Toom's eroder theorem. In their proof the bad regions
are areas that contain 0's, while in our proof the bad regions are areas where the density of 1's
is not large enough in some small box. In \citep{BG-91}, the state outside a bad region is always the same, all 1's.
In this paper, we must use functions from Weinberger's proof of the existence of wave speeds to control the decrease of the bad region.

By the same logic, if some speed is negative and the initial configuration is finite then it can be put inside a shrinking parallelogram.

\begin{conjecture}
If there is a $(\theta_1,\theta_2)\in S^1$ with 
and $c^*(\theta) < 0$, the system cannot survive starting from an initial configuration with only a finite number of particles.
\end{conjecture}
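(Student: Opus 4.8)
\noindent
The plan is to make the ``shrinking parallelogram'' heuristic into a proof. Suppose $c^*(\theta_1,\theta_2)<0$ for some $(\theta_1,\theta_2)\in S^1$ with $\theta_1\theta_2\neq 0$. By the reflection symmetry of $k$ the four unit vectors $\theta^{(1)},\dots,\theta^{(4)}:=(\pm\theta_1,\pm\theta_2)$ all have the same negative speed, and they are the outward normals of a parallelogram $P=\{x:|x\cdot(\theta_1,\theta_2)|\le a,\ |x\cdot(\theta_1,-\theta_2)|\le a\}$. Since the process is attractive, a configuration with finitely many particles is dominated, in the basic coupling, by the process started from all $1$'s on a large box $B$; taking $a$ large makes $B\subseteq P$. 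So it suffices to show that, for $L$ large, the process started from $\mathbf 1_B$ dies out, and the mechanism will be that the region where $1$'s are dense enough to reproduce stays inside a parallelogram $P_n\subseteq P$ each of whose four sides recedes inward at a rate at least $c$, for any fixed $c\in(0,\min_i|c^*(\theta^{(i)})|)$, so that $P_n=\varnothing$ once $n$ exceeds a deterministic $N_0$.

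\mn
The first ingredient is an erosion estimate for the integro-differential equation \eqref{IDE}; write $Q$ for its (monotone) right-hand side. For each $i$ pick $c_i\in(c^*(\theta^{(i)}),0)$. The super-solution construction inside Weinberger's proof \cite{W82} of the existence of spreading speeds yields a nonincreasing profile $W_i$, equal to $\rho_s$ on a half-line and decaying to $0$, such that $x\mapsto W_i(x\cdot\theta^{(i)}-c_in-A_i)$ is a time-dependent super-solution of \eqref{IDE} for every shift $A_i$; taking the $A_i$ large, each of these equals $\rho_s$ on all of $P$ at time $0$. Because $Q$ is order preserving, $\bar u_n:=\min_i W_i(x\cdot\theta^{(i)}-c_in-A_i)$ is again a super-solution, so every solution of \eqref{IDE} with $u_0\le\rho_s\mathbf 1_P$ satisfies $u_n\le\bar u_n$. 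But $\bar u_n<\epsilon$ off the parallelogram $\{x:x\cdot\theta^{(i)}\le c_in+A_i+W_i^{-1}(\epsilon)\text{ for all }i\}$, which is empty once $n>N_0:=\max_i(A_i+W_i^{-1}(\epsilon))/|c_i|$; hence $u_n<\epsilon$ everywhere for such $n$, and with $\epsilon<\rho_u$ the scalar map \eqref{Iter}, which drives $[0,\rho_u)$ to $0$, forces $u_n\to0$. (There is a routine wrinkle in that a profile started from $\mathbf 1_B$ can transiently exceed $\rho_s$; since the supremum of the iterates decreases to $\rho_s$ in finitely many steps, one starts the comparison a few steps later with the plateau of the $W_i$ inflated to a super-fixed-point $\rho_s+\delta$ of \eqref{Iter}, which the construction accommodates.)

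\mn
To pass from \eqref{IDE} to the particle system I would argue as in the proof of Theorem~\ref{NegSpeed}: the hydrodynamic limit underlying Theorem~\ref{PosSpeeds} together with a renormalization/block construction of Bramson--Gray type \cite{BG-91}. Declare a space-time block \emph{bad} when the density of $1$'s in a designated sub-box exceeds $\epsilon$. Through the hydrodynamic limit, the barriers $\bar u_n$ say that, for $L$ large, a configuration whose bad blocks sit inside $P$ keeps its bad blocks inside the receding $P_n$ with probability close to $1$, and the profiles $W_i$ quantify how fast the bad region must shrink -- this is the role Weinberger's functions play in a \cite{BG-91}-style comparison. One sets the construction up so that survival of the particle system forces percolation of bad blocks in an associated oriented percolation; since the bad region must recede and is empty after time $N_0$, that oriented percolation has no infinite cluster for $L$ large, and hence the process dies out.

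\mn
The main obstacle -- and the reason this is stated only as a conjecture -- is carrying out that block construction rigorously. It is more delicate than the erosion in Theorem~\ref{NegSpeed}, where the state outside a bad (low-density) region is essentially the fixed upper invariant measure: here the complement of a bad (high-density) region is itself a low-density region running down to $0$, so Weinberger-type barriers must be used on \emph{both} sides of the interface, both to bound how fast the dense region is replenished from its interior and to keep the low-density exterior from re-igniting. It is also more delicate than Bramson and Gray's proof of Toom's eroder theorem for the same reason. One can alternatively argue by contradiction, deriving from survival of a finite configuration -- via a restart lemma in the spirit of \cite{BezGr-94} -- a macroscopic island of density near $\rho_s$ and then eroding it; but establishing such a restart lemma for a sexual-reproduction dynamics, which has no self-duality, is of the same order of difficulty and is essentially the open problem recorded in the abstract.
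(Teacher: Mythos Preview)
This statement is labeled a \emph{conjecture} in the paper and is not proved there; the authors offer only the heuristic that a finite configuration ``can be put inside a shrinking parallelogram,'' and then explicitly remark that the usual block-construction route to extinction (show a large dead region expands) fails because this process is good at filling in holes, so ``one will need a much different method.'' There is thus no proof in the paper to compare yours against.

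Your proposal is itself a heuristic sketch rather than a proof, and you say so. The picture you lay out --- four reflected negative-speed normals bounding a shrinking parallelogram, Weinberger-type barriers for the deterministic iteration \eqref{IDE}, then a Bramson--Gray comparison to transfer this to the particle system --- is exactly the paper's intended intuition, and you correctly identify the principal obstacle as making the block construction rigorous when the exterior of the bad (high-density) region is itself a dynamic low-density configuration rather than a fixed stable state. Two smaller gaps in your deterministic step are worth flagging: (i) Weinberger's iteration in \cite{W82} produces \emph{sub}-solutions (the increasing sequence $f_n$ used in Section~3 of the paper), not super-solutions receding at speeds just above a negative $c^*$, so the existence of your profiles $W_i$ with the stated traveling-super-solution property is not supplied by that reference and would require a separate construction; (ii) the ``routine wrinkle'' that $\mathbf{1}_B$ exceeds $\rho_s$ is less routine than you suggest, since inflating the plateau of $W_i$ to a super-fixed-point $\rho_s+\delta$ of the scalar map \eqref{Iter} does not automatically yield a super-solution for the spatial operator $Q$. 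These are secondary to the main difficulty you already name, which remains the open problem recorded in the paper.
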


\noindent
In a number of situations, for example see   \citep[Chapter 7]{CDP}, block constructions
have been used to show that a process dies out. However, in that situation, one shows that a large enough dead region 
will expand. Since our process is good at filling in holes, that conclusion is false here, and one will need a much different
method to prove our conjecture.

\section{Hydrodynamic Limit}

In this section, we will show that as $L\to\infty$, the particle system converges to the solution of the integro-differential equation
\eqref{IDE}. The first thing to do is to explain what it means for a sequence of configurations $\xi_n: \ZZ^2/L \to \{0,1\}$
to converge to a function $u_n(x)$. To do this, we will let $\gamma \in(0,1)$ and partition the plane into squares with
side $L^{-\gamma}$ whose corners are at the points $L^{-\gamma}(m,n)$ with $m,n \in \ZZ$.  
For any $x \in \mathbb{Z}^2/L$, let $x^*$ be the bottom-left corner of the box containing $x$, 
and let 
\begin{equation}\label{eq:BL}
B^{L}(x) = x^* + \big[0, L^{-\gamma})^2.
\end{equation} Each such box contains 
$\sim L^{2-2\gamma}$ points.
 
Let $S_{n}^{L}(x)$ be the number of particles in $B^{L}(x)$ at time $n$:
\begin{equation*}
 S_{n}^{L}(x) = \sum_{y\in B^{L}(x)}\xi_{n}^{L}(y).
\end{equation*}
Define $\xi_{n}^{L} \sim u_n$ to mean that for all $K$, as $L\to\infty$
\begin{align*}
 \sup_{x \in [-K,K]^2  \cap  \mathbb{Z}^2/L}  \left\vert \frac{S_{n}^{L}(x)}{L^{2-2\gamma}} - u_{n}(x^*) \right\vert \to 0
\end{align*}
Let $R_{n}^{L}(x) = $ number of pairs of adjacent particles in $B^L(x)$ at time $n$:
$$
 R_{n}^{L}(x) = \sum_{y\in B^{L}(x)} \zeta_n^L(y) \quad\hbox{where}\quad
\zeta_n^L(y)  = \xi_{n}^{L}(y) \cdot \frac{1}{2} [\xi_n^L(y+e_1) + \xi_n^L(y+e_2)],
$$
where $e_1$ and $e_2$ are unit vectors of $\mathbb{Z}^2/L.$ 
Define $\zeta_{n}^{L} \sim u_n^2$ to mean that for all $K<\infty$, as $L \to \infty$
\begin{align*}
 \sup_{x \in [-K,K]^2  \cap  \mathbb{Z}^2/L}  \left\vert \frac{R_{n}^{L}(x)}{L^{2-2\gamma}} - u_{n}(x^*)^2 \right\vert \to 0
\end{align*}

\begin{theorem}\label{HydrodLimit}
Suppose that $u_0(x):\mathbb{R}^2\to [0,1]$ is continuous
and that the sequence of initial configurations $\xi_{0}^{L} \sim u_0$ and $\zeta_{0}^{L} \sim u_{0}^{2}$. 
Then as $L \to \infty$, $\xi_{n}^{L} \sim u_n$ and $\zeta^L_n \sim u_n^2$, where $u_n$ is the solution of
$$
u_{n+1} = (1 - \eta)\left[u_n + \beta (1- u_n)\left(k*u_{n}^{2}\right)\right].
$$
\end{theorem}

\begin{proof}
By induction, it suffices to prove the result for $n=1$. Let $T_{x}$ be the translation by $x$.
In order to simplify the computation of expectations and variances, we will modify the birth step so
that the first parent $y$ will be chosen according to the probability kernel $T_{x^*}k_{L}$, instead of $T_{x}k_{L}$. 
As before, the second parent is chosen at random, with equal probability, from the nearest neighbors of the first parent.
The next lemma shows that at time $1$ the two processes are equal with high probability.

\begin{lemma}\label{Claim}
If $\xi_{0}^{L}(x) = \hat{\xi}_{0}^{L}(x) $ for each $x\in \mathbb{Z}^2/L$, then
$$
P\left(\xi_{1}^{L}(x) \neq \hat{\xi}_{1}^{L}(x)\right) \to 0 \text{  as  } L \to \infty.
$$
\end{lemma}

\begin{proof}
For $y\in\mathbb{Z}^2/L$, let $\alpha_L(y) = k_L(y-x)$, $\beta_L(y) = k_L(y-x^*)$, and let
$$
p_L = \sum_{y\in \ZZ^2/L} \min\{\alpha_L(y), \beta_L(y)\}
$$
A standard argument from analysis shows that if $f\in L^1(\RR^2)$ and $\delta =(\delta_1,\delta_2) \to (0,0)$, 
then $\| T_\delta f - f \|_1 \to 0$ (approximate $f$ by a continuous function $g$). 
This implies that as $L\to\infty$, $p_L \to 1$ uniformly for $x\in \mathbb{Z}^2/L$.

To couple the two processes, use the same coin flips to see if births should occur.
If a birth event occurs at site $x\in Z^2/L$ at time $1$, flip a coin with probability $p_L$ of heads. If heads comes up, then with probability 
$$\min\{\alpha_L(y),\beta_L(y)\}/p_L$$ choose $y$ as the first parent for $x$ in both processes, and then make the same choice of second parent $z$. 
Otherwise, the particle in $\xi_L$ chooses  its first parent $y$ with probability $$(\alpha_L(y)-\beta_L(y))^{+}/(1-p_L),$$ 
the particle in $\hat{\xi}_L$ chooses its first parent $y$ with probability $$(\beta_L(y)-\alpha_L(y))^{+}/(1-p_L),$$ and the choice
of second parents are made independently. Once the births have been done, we use the same coin flips to decide the deaths and the proof is complete.
\end{proof}

Let $P_{0}$ denote the probability law for the process $\hat{\xi}_{n}^L$ with initial configuration $\hat{\xi}_{0}^{L}$. In order to write out the computations more compactly, introduce the notations 
$X_i = \hat{\xi}_{i}^{L}(x)$, $Y_i = \hat{\xi}_{i}^{L}(y)$, and $ Z_i = \hat{\xi}_{i}^{L}(z)$
for $i=0,1$ and $x,y,z\in\mathbb{Z}^2/L$, and set 
\begin{equation*}
K_L(x) = \sum_{y\in\mathbb{Z}^2/L} k_{L}(y-x^{*})\hat{\xi}_{0}^{L}(y) \cdot \frac{1}{4} \sum_{z\sim y} \hat\xi^L_0(z) .
\end{equation*}
so that $$P_{0} \left(X_1 = 1 \right) = (1 - \eta)\left[X_0 + \beta\left( 1 - X_0 \right) \cdot K_L(x) \right] := p_x.$$
Since $X_1$ is Bernoulli, $\text{Var}_{0}\left(X_1\right) = p_x - p_{x}^{2}$.

The total number of particles alive in $B^{L}(x)$ at time $1$ is  
$$
\hat{S}_{1}^{L}(x) = \sum_{y\in B^{L}(x)}\hat{\xi}_{1}^{L}(y),
$$
so the expected proportion of occupied sites in $B^{L}(x)$ at time $1$ is 
\beq\label{ExpHat}
\mathbb{E}_{0}\left( \frac{\hat{S}_{1}^{L}(x)}{L^{2-2\gamma}} \right) 
 = (1 - \eta)\left[\frac{\hat{S}_{0}^{L}(x)}{L^{2-2\gamma}} + \beta \left(1 -  \frac{\hat{S}_{0}^{L}(x)}{L^{2-2\gamma}}\right)K_L(x)\right].
\eeq
For $y \neq z$ in $B^{L}(x)$, 
\begin{align*}
\text{Cov}_{0} &\left( Y_1,Z_1 \right)  =  \mathbb{E}_{0} \left(Y_{1}Z_1\right) - \mathbb{E}_{0}\left(Y_1\right)\mathbb{E}_{0}\left(Z_1\right) \\
     & =  (1-\eta)^2 \bigg[ Y_0Z_0 + \beta\cdot K_L(x)\left(\big(1-Y_0\big)Z_0 + Y_0\big(1-Z_0\big)\right) +\: \left(\beta\cdot K_L(x)\right)^2\big(1-Y_0\big)\big(1-Z_0\big)\bigg] - p_{y}p_{z} = 0. 
\end{align*}
It is for this reason we modified our process so that all sites in $B_L(x)$ use the kernel $k_l(\cdot - x^*)$.

Since the covariance is 0, 
$$
\text{Var}_{0}\left( \frac{\hat{S}_{1}^{L}(x)}{L^{2-2\gamma}}\right) = \frac{1}{L^{4-4\gamma}}\sum_{y\in B^{L}(x)}\left(p_{y} - p_{y}^{2}\right)
\leq \frac{1}{L^{4-4\gamma}}\sum_{y\in B^{L}(x)}p_{y}
$$
The variance of $\hat{S}_{1}^{L}(x)/L^{2-2\gamma}$ is
\begin{align*}
   \le  \frac{1}{L^{4-4\gamma}}\left[ \hat{S}_{0}^{L}(x) + \beta \left(L^{2-2\gamma} -  \hat{S}_{0}^{L}(x)\right)\right]  
\leq  C\cdot \frac{1}{L^{2-2\gamma}}.
\end{align*}
where $C$ does not depend on $L$. By Chebyshev's inequality, 
\begin{align*}
P_{0}\left( \left\vert \frac{\hat{S}_{1}^{L}(x)}{L^{2-2\gamma}} 
- \mathbb{E}_0\left(\frac{\hat{S}_{1}^{L}(x)}{L^{2-2\gamma}}\right) \right\vert \geq \delta \right)
 \leq \delta^{-2} \text{Var}\left( \frac{\hat{S}_{1}^{L}(x)}{L^{2-2\gamma}} \right) \leq \frac{C}{\delta^{2}L^{2-2\gamma}}.
\end{align*}
There are $4L^{2\gamma}\cdot K^2$ boxes of side length $L^{-\gamma}$ in each $[-K,K]^2$ box, so 
\begin{align}\label{ChebHat}
P_{0}\left( \sup_{x \in [-K,K]^2  \cap  \mathbb{Z}^2/L}  \left\vert \frac{\hat{S}_{1}^{L}(x)}{L^{2-2\gamma}} - \mathbb{E}_0\left(\frac{\hat{S}_{1}^{L}(x)}{L^{2-2\gamma}}\right) \right\vert \geq \delta \right) \leq \frac{CL^{2\gamma}}{\delta^{2}L^{2-2\gamma}}. 
\end{align}
For $\gamma \in (0,\frac{1}{2})$, this probability tends to $0$ as $L \to \infty$.
For $x\in\mathbb{R}^2$ and $x_L\in \mathbb{Z}^2/L$ such that $x_L\to x$  as $L\to\infty$, $$K_L(x_L)\to (k* u_0^2)(x).$$ By the assumptions of Theorem \ref{HydrodLimit}, $$\hat{S}_{0}^{L}(x)/L^{2-2\gamma}= S_{0}^{L}(x)/L^{2-2\gamma} \to u_{0}^{}(x).$$ Together with \eqref{ExpHat}, this implies that
\begin{align*}
 \mathbb{E}_0\left(\frac{\hat{S}_{1}^{L}(x_L)}{L^{2-2\gamma}}\right) \to u_1(x) \quad \text{ as } \quad L\to\infty.
\end{align*}
Then by \eqref{ChebHat}, when $L\to\infty$,  $$\hat{S}_{1}^{L}(x_L)/L^{2-2\gamma} \to u_1(x)$$ and thus $\hat{\xi}_{1}^{L} \sim u_1$, and from Lemma \ref{Claim} 
it follows that $\xi_{1}^{L} \sim u_1$.

Our next step is to prove a result for pairs. Let
$$
\hat R_{n}^{L}(x) = \sum_{y\in B_*^{L}(x)} \hat\zeta_n^L(y) \quad\hbox{where}\quad
\hat\zeta_n^L(y)  = \hat\xi_{n}^{L}(y) \cdot \frac{1}{2} \left[\hat\xi_n^L(y+e_1) + \hat\xi_n^L(y+e_2)\right]
$$
and $B_*^{L}(x)$ is the set of points $y$ so that $y+e_1$ and $y+e_2$ are also in the box.
As argued in the first part of the proof, given the initial configuration the $\hat \xi^L_1(y)$ are independent
so when $y,z \in B^L(x)$ we have $$E\left(\hat\xi_{n}^{L}(y) \hat\xi_n^L(z)\right) = p_x^2.$$ The pairs $\left(\hat\xi^L_1(y), \hat\xi^L_1(z)\right)$ for $y \in B^L_*(x)$ and $z = y + e_1$ or $y+e_2$ are not independent when $\{y,z\} \cap \{y',z'\} \neq\varnothing$ but one still has the estimate
$$
\text{Var}_{0}\left( \frac{\hat{R}_{1}^{L}(x)}{L^{2-2\gamma}} \right) \le \frac{C}{L^{2-2\gamma}}
$$
so the previous proof can be repeated almost word for word to conclude that $\hat \zeta^L_1 \sim u_1^2$ and by Lemma \ref{Claim} 
$\zeta^L_1 \sim u_1^2$.
\end{proof} 

With the hydrodynamic limit established, the proof of Theorem \ref{PosSpeeds} is routine. 
Let $\delta > 0$ be small enough so that $\rho_s - 2 \delta > \rho_u + 2\delta$.  \citep[Theorem 6.2]{W82} implies that

\begin{lemma} \label{Wlemma}
If $K$ is large enough and $N \ge N_K$ then $u_0(x) > \rho_u + \delta$ on $[-K,K]^2$ implies that $u_N(x) > \rho_s - \delta$ on $[-4K,4K]\times [-K,K]$.
\end{lemma}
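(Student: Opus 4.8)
The plan is to recognize Lemma \ref{Wlemma} as the concrete two-dimensional spreading statement that Weinberger's theory produces once the recursion \eqref{IDE} is fed into it, and then to do the bookkeeping that turns Weinberger's asymptotic conclusion into the stated inclusion of rectangles. Write \eqref{IDE} as $u_{n+1}=Q[u_n]$, where
\[
Q[u](x)=(1-\eta)\Big[u(x)+\beta\big(1-u(x)\big)\big(k*u^2\big)(x)\Big].
\]
First I would check that $Q$ satisfies the standing hypotheses of \cite{W82}: it commutes with all translations of $\RR^2$; it is continuous and compact for uniform convergence on compact sets; it maps $[0,\rho_s]$-valued functions into $[0,\rho_s]$-valued functions; and it is order preserving on this class. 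For the last two points — the only ones that need care — I would use that $\beta\le 1$ (it is a probability), so that $\beta(k*u^2)(x)\le\beta\rho_s^2<1$ whenever $0\le u\le\rho_s$; then for $0\le u\le v\le\rho_s$,
\[
Q[v](x)-Q[u](x)=(1-\eta)\big(v(x)-u(x)\big)\big[1-\beta(k*u^2)(x)\big]+(1-\eta)\beta\big(1-v(x)\big)\big(k*(v^2-u^2)\big)(x)\ge 0,
\]
and, because the cubic $g(v)=(1-\eta)[v+\beta(1-v)v^2]$ is nondecreasing on $[0,\rho_s]$ with $g(\rho_s)=\rho_s$, the pointwise bound $(k*u^2)(x)\le\rho_s^2$ gives $0\le Q[u]\le\rho_s$. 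The fixed-point picture is exactly the one displayed after \eqref{Iter}, which is why $\beta>4\eta/(1-\eta)$ was assumed: for constants, $0$ and $\rho_s$ are stable, $\rho_u$ is unstable, iterating any constant in $(\rho_u,\rho_s)$ increases to $\rho_s$, and iterating any constant in $(0,\rho_u)$ decreases to $0$.

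Next I would quote Theorem 6.2 of \cite{W82} in its anisotropic form. It produces a closed convex spreading set $\mathbf S\subset\RR^2$ containing the origin, whose radial function $\theta\mapsto\sup\{t\ge 0:t\theta\in\mathbf S\}$ equals the speed $c^*(\theta)$ introduced before Theorem \ref{PosSpeeds}, together with the hair-trigger statement: for the level $\sigma=\rho_u+\delta\in(\rho_u,\rho_s)$ there is a finite $r^*$ such that if $u_0\ge\sigma$ on some ball of radius at least $r^*$, then for every compact $\mathbf S'\subset\operatorname{int}\mathbf S$ there is an $n_0$ with $u_n(x)\ge\rho_s-\delta$ for all $x\in n\mathbf S'$ and all $n\ge n_0$. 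Order preservation of $Q$ is what lets us replace $u_0$ by $\sigma$ times the indicator of a ball and keep a lower bound on $u_n$, and the choice of $\delta$ with $\rho_s-2\delta>\rho_u+2\delta$ supplies the gap the hair-trigger argument needs. Since $c^*(\theta)>0$ for every $\theta\in S^1$ and $\mathbf S$ is convex, the origin lies in $\operatorname{int}\mathbf S$, so $\mathbf S\supseteq\overline B(0,r_0)$ for some $r_0>0$; take $\mathbf S'=\overline B(0,r_0/2)$.

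It then remains to fix constants. Let ``$K$ large enough'' mean $K\ge r^*$. Then $u_0>\rho_u+\delta$ on $[-K,K]^2\supseteq\overline B(0,K)$ triggers the previous paragraph and yields $n_0=n_0(K)$ with $u_n\ge\rho_s-\delta$ on $n\mathbf S'=\overline B(0,nr_0/2)$ for all $n\ge n_0$. Since $[-4K,4K]\times[-K,K]\subseteq\overline B(0,\sqrt{17}\,K)$, putting $N_K=\max\{n_0(K),\ \lceil 2\sqrt{17}\,K/r_0\rceil\}$ makes $Nr_0/2\ge\sqrt{17}\,K$ for every $N\ge N_K$, so $n\mathbf S'\supseteq N_K\mathbf S'\supseteq[-4K,4K]\times[-K,K]$ and hence $u_N\ge\rho_s-\delta$ there, as claimed.

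I expect the real obstacle to be not any single computation but making sure that \eqref{IDE} genuinely falls under the precise version of Weinberger's theorem being quoted: verifying order preservation and invariance of $[0,\rho_s]$ uniformly over the admissible $(\beta,\eta)$ (which rests on $\beta\rho_s^2<1$, i.e., on $\beta\le 1$), and confirming that its fixed-point and hair-trigger hypotheses are satisfied by the bistable picture here rather than only by a monostable one. Once $Q$ sits inside that framework, the geometry above is routine, and in fact this is the same reduction Neuhauser used for her one-dimensional integro-differential equation in \cite{N-94}.
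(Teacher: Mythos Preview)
Your proposal is correct and takes essentially the same approach as the paper: the paper's entire ``proof'' is the single sentence ``Theorem 6.2 in \cite{W82} implies that'' preceding the lemma statement, and you have simply filled in the verification of Weinberger's hypotheses for the operator $Q$ and the elementary geometry converting the spreading-set conclusion into the rectangle inclusion. Your care in checking order preservation (which relies on $\beta\le 1$ since $\beta$ is a probability in this model) and in noting that the bistable fixed-point structure must match the hypotheses of the cited theorem are exactly the details a reader would need to supply.
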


Let $$I_n = 2nK + [-K,K]^2.$$ We say that $I_n$ is good at time $t$ if we have $S^L_t(x) \ge \rho_u + 2\delta$ for all $x\in I_n$. Combining Theorem \ref{HydrodLimit}
with Lemma \ref{Wlemma} gives

\begin{lemma} \label{induct} 
Let $\ep>0$. If $L$ is large enough and $I_0$ is good at time 0, then with probability $\ge 1-\ep$, $I_1$ and $I_{-1}$ are good at time $N$.
\end{lemma}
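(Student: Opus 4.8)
The plan is to push the conclusion of Lemma~\ref{Wlemma} back to the particle system through the hydrodynamic limit, Theorem~\ref{HydrodLimit}. Two things have to be dealt with. First, ``$I_0$ good'' only gives a lower bound on the empirical density, whereas Theorem~\ref{HydrodLimit} wants genuine convergence $\xi^L_0\sim u_0$ to a continuous profile; this I would handle by attractiveness of the dynamics, comparing $\xi^L_\cdot$ from below with a configuration built by hand. Second, Theorem~\ref{HydrodLimit} also needs control of the pair field, $\zeta^L_0\sim u_0^2$, which the stated definition of ``good'' says nothing about --- and this really is essential, since a configuration supported on one sublattice of $\ZZ^2/L$ has density $\tfrac12$, no adjacent pairs, and dies out. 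So any workable notion of ``good'' must also keep $R^L_t(x)/L^{2-2\gamma}$ close to the square of the density field there (equivalently: keep $\xi^L_t$ restricted to $I$ looking product-like). I will take ``$I$ good at time $t$'' to include this, and note that it is automatically carried forward by the dynamics: the offspring produced in one step are conditionally independent given the previous configuration, so the new pair field concentrates on the square of the new density field --- which is exactly what makes the induction close.

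Granting this, I would fix a small $a\in(0,1)$, choose a continuous $u_0:\RR^2\to[0,1]$ with $\rho_u+\delta<u_0\le\rho_u+2\delta$ on $[-(1-a)K,(1-a)K]^2$ and $u_0\equiv0$ off $[-K,K]^2$, and then couple, by attractiveness, so that $\xi^L_n\ge\bar\xi^L_n$ for all $n$, where $\bar\xi^L_0\le\xi^L_0$ is obtained by deleting every particle outside $[-K,K]^2$ and then trimming, box by box inside $[-K,K]^2$, so that $\bar\xi^L_0\sim u_0$ and $\bar\zeta^L_0\sim u_0^2$. Here the strengthened notion of ``good'' is what is used: because $\xi^L_0$ is product-like on $[-K,K]^2$ with density at least $\rho_u+2\delta$, one can delete particles from each box --- deleting a particle isolated in the current configuration removes no pair-weight, deleting one sitting inside a cluster removes about two --- so as to steer both $S^L_0(x)/L^{2-2\gamma}$ and $R^L_0(x)/L^{2-2\gamma}$ to the prescribed $u_0(x^*)$ and $u_0(x^*)^2$. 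Then Theorem~\ref{HydrodLimit} applies to $\bar\xi^L_\cdot$: as $L\to\infty$, $\bar\xi^L_N\sim u_N$ and $\bar\zeta^L_N\sim u_N^2$, with $u_N$ the $N$-th iterate of~\eqref{IDE} started from $u_0$.

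Next I would apply Lemma~\ref{Wlemma}. Since $u_0>\rho_u+\delta$ on $[-(1-a)K,(1-a)K]^2$, that lemma --- or rather the all-directions version of it available here, $c^*(\theta)$ being positive in every direction, which for $N$ large makes $\{u_N>\rho_s-\delta\}$ contain any prescribed box --- gives $u_N>\rho_s-\delta$ on a region containing $I_1\cup I_{-1}$. Hence, for $L$ large and every $x\in I_1\cup I_{-1}$,
\[
\frac{S^L_N(x)}{L^{2-2\gamma}}\ \ge\ \frac{\bar S^L_N(x)}{L^{2-2\gamma}}\ >\ \rho_s-2\delta\ >\ \rho_u+2\delta,
\]
the first inequality being the coupling $\xi^L_N\ge\bar\xi^L_N$ and the last being the standing choice of $\delta$; moreover, by the remark in the first paragraph, $\xi^L_N$ restricted to $I_1\cup I_{-1}$ is still product-like. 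Thus $I_1$ and $I_{-1}$ are good at time $N$. The probability of the concentration events used above tends to $1$ as $L\to\infty$, so it is $\ge1-\ep$ once $L$ is large, as claimed.

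The step I expect to be the real obstacle is the reduction carried out in the first two paragraphs: turning the one-sided hypothesis ``$I_0$ good'' --- which by itself says nothing about how the particles sit inside the boxes --- into a configuration that Theorem~\ref{HydrodLimit} can be fed. This is where the pair density must be built into the definition of ``good'' (the density bound alone genuinely does not suffice), and where the box-by-box trimming has to be carried out with some care, using the combinatorics of the pair functional $\zeta^L$. Everything afterwards --- running Theorem~\ref{HydrodLimit}, invoking Lemma~\ref{Wlemma}, reading off goodness at time $N$, and the minor geometric juggling between the box in ``good'' and the window in Lemma~\ref{Wlemma} (absorbed by taking $N$ large) --- is routine.
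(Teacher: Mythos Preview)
Your strategy is exactly the paper's: the paper's entire proof of this lemma is the sentence ``Combining Theorem~\ref{HydrodLimit} with Lemma~\ref{Wlemma} gives [the lemma].'' You have, however, been more careful than the paper and have put your finger on a genuine sloppiness in its formulation: with ``good'' defined only as a lower bound on $S^L_t(x)$, the hypotheses of Theorem~\ref{HydrodLimit} are not met, since that theorem also demands $\zeta^L_0\sim u_0^2$ --- and your sublattice example shows this cannot simply be dropped. Your proposed repair (build a pair-density clause into ``good''; observe that one step of the dynamics regenerates product-like structure via the conditional independence of births already exploited in the proof of Theorem~\ref{HydrodLimit}; and use attractiveness plus box-by-box trimming to pass from the one-sided ``good'' hypothesis to a two-sided profile $\bar\xi^L_0\sim u_0$, $\bar\zeta^L_0\sim u_0^2$ that Theorem~\ref{HydrodLimit} can consume) is the natural one and is sound. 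The minor geometric mismatch between the $(1-a)K$ window and $I_{\pm 1}$ is, as you note, harmless under the standing hypothesis $c^*(\theta)>0$ for all $\theta$, which lets Weinberger's Theorem~6.2 expand the superlevel set in every direction for $N$ large. So the proposal is correct, follows the paper's intended route, and in fact supplies details the paper omits.
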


\noindent
Since the process cannot move by more than distance 1 in one time step, the events involved in the last lemma have a finite range of
interaction than only depends on $N$. It follows from this and results in \citep{D-95} that if $\ep < \ep_N$, the
good sites dominate a supercritical oriented percolation and hence there is a nontrivial stationary distribution.

\section{Proof of Theorem \ref{NegSpeed}}

The result will be proved using the comparison model of \cite{BG-91}. 
The goal is to show that our quadratic contact process and their comparison process 
can be coupled so that regions in the quadratic contact process with a low density 
of particles are entirely contained in the vacant region of the comparison model. 
Then the existence of a nontrivial stationary distribution for the comparison model 
will imply the existence of a nontrivial stationary distribution for the quadratic contact model.

In the comparison process of Bramson and Gray, at each time $t>0$, the 
plane $\mathbb{R}^2$ is divided into two regions: a vacant and a nonvacant 
region. Let $A_t$ denote the vacant region of the comparison process at time 
$t$ and take $A_0 = \varnothing$. Fix unit {\it orientation vectors} 
$n_1, n_2, n_3 \in S^1$. $A_t$ will be a union of triangles, each 
triangle having edges perpendicular to the orientation vectors. For there to exist triangles 
with edges perpendicular to the $n_i$, it must be the case that the $n_i$, $i=1,2,3$ span 
$\mathbb{R}^2$ and that 
\begin{equation}\label{directionCondition}
n_1 = -\alpha_2 n_2 - \alpha_3 n_3,
\end{equation}
for some $\alpha_2,\alpha_3 > 0$.
The edges of the triangles move inward at linear rates, to be specified below. 
$T(x,s;t)\subset A_t$ will denote the triangle that first appears at time $s$ and 
with incenter at $x\in\mathbb{R}^2$.

New triangles that are part of the vacant region are created in two ways:

I. Let
$\mathcal{P}$ be a Poisson process on $\mathbb{R}^2 \times [0,\infty )$ with intensity $\epsilon$. 
If $(x,s) \in \mathcal{P}$, a triangular vacant region with edges perpendicular to the orientation 
vectors and with incenter at $x \in \mathbb{R}^2$ and having radius $r$, is created at time $s$. 
The $i^{th}$ edge of this triangle, corresponding to the orientation vector $n_i$  will move inward with 
linear rate $a_i$, $i=1,2,3$ (in general $a_i's$ may be negative, and in this case the 
corresponding edges move outward). The collection of all such triangles at time $t$
is what Bramson and Gray call the {\it noninteractive region} at $t$ and the individual triangles, they call
{\it death regions}.

II. The second way that new triangles can appear in the vacant region takes into account 
possible overlaps and collisions of the shrinking and/or expanding triangles that make up 
the vacant region. If $(x,s)\in\mathcal{P}$, and there exist $T_k(x_k,s_k;s)\in A_s$ such that 
$$T(x,s;s)\cap \left(\bigcap_{k} T_k(x_k,s_k;s)\right) \neq \varnothing,$$ 
taking the maximal such intersection, this intersection forms an 
{\it overlap region} that is created at time $s$.
This region will also be triangular with edges perpendicular to $n_i$, $i=1,2,3$ \citep[Proposition~1]{BG-91}.
A single death region $T(x,s;s)$ can give rise to more than one overlap region.
The edges of the overlap region move outward at positive rate $b$, the 
{\it interaction rate}, until each edge catches up to the corresponding edges 
of all of the regions that produced the original overlap. At that 
point, the edges of the overlap or collision region will begin to move inward 
with rates $a_i$. In addition to overlaps from newly formed death regions
that overlap already existing regions, triangles with outward moving edges 
can collide with each other: if there exist $T_k(x_k,s_k;s)\in A_s$ such that 
$$T(x,s;s)\cap \left(\bigcap_{k} T_k(x_k,s_k;t)\right) = \varnothing,$$ 
for $t < s$, but 
$$T(x,s;s)\cap \left(\bigcap_{k} T_k(x_k,s_k;s)\right) = \{x\}.$$
In this case, a collision region is formed, starting out as a triangle with inradius $0$ at the single point $\{x\}$,
and with edges perpendicular to the $n_i$, $i=1,2,3$, moving outward at rate $b$ until they have caught up with 
all of the corresponding edges of the triangles that initiated the collision.

   If $a_i > 0$, for 
$i=1,2,3$, the process has a nontrivial 
stationary distribution when the error rate $\epsilon$ is small enough \citep[Theorem~1]{BG-91}.

Coming back to the quadratic contact process, take $\xi_1$ to be a direction with 
positive wave speed $\alpha_1 > 0$, since we have assumed that there is at least one positive 
speed. Because the offspring distribution kernel has $\mathbb{Z}^2$-symmetry, and because the 
wave speed $c^*(\xi)$ is a lower semi-continuous function in $\xi$ \cite[Proposition~5.1]{W82}, 
the existence of a positive speed in one direction implies the existence of a positive speed 
in three directions $\xi_1, \xi_2,$ and $\xi_3$, that span $\mathbb{R}^2$ and satisfy \eqref{directionCondition}.
Let $\alpha_2$ and $\alpha_3$ be the wave speeds corresponding to $\xi_2$ and $\xi_3$, respectively.

We shall say that the box $B^{L}(x)$ has {\it low density} at time $n$ 
(see~\eqref{eq:BL} for the definition of $B^L(x)$) if the density 
of the particles inside $B^{L}(x)$ falls below an appropriate 
threshold $\alpha \in (\rho_u,\rho_s)$, to be specified below. 
We will refer to boxes that do not have low density as good boxes.

Recall that $0 < \rho_u < \rho_s < 1$ are the nonzero fixed points of the operator
\begin{align*}
 Q[u] = \left(1 - \eta\right) \left[u + \beta\left(1 - u\right)\left(k*u^{2}\right) \right],
\end{align*}
 which exist when $\beta > 4\eta/(1-\eta)$.
 
 For what follows, we need that
 
 \begin{lemma}\label{Monotone}
  $Q$ is monotone, i.e. if $u\leq v$, then $Q[u]\leq Q[v];$ furthermore, if $u < v$, then $Q[u]< Q[v].$

 \end{lemma}
 \begin{proof}
  \begin{align*}
   Q[v]-Q[u]  & = \left(1 - \eta\right)\left[\left(v - u\right) + \beta\left((1-v)k\ast v^2 - (1-u)k\ast u^2\right)\right]\\
  & \geq   \left(1 - \eta\right)\left[\left(v - u\right) + \beta\left((1-v)k\ast v^2 - (1-u)k\ast v^2\right)\right] \\
   &=  \left(1 - \eta\right)\left(v - u\right)\left(1 - \beta \left(k\ast v^2\right)\right)\\
   &\geq 0.
  \end{align*}
  The first inequality holds since $0 \leq u \leq v$, and the second 
  inequality holds because $v\geq u$ and $0\leq k\ast v^2 \leq 1$ 
  since $k$ is a probability kernel and $0\leq v \leq 1$. If $u < v$, the 
  last inequality is strict.
 \end{proof}

The collection of low density boxes at time $n$ will be contained in the vacant 
region of the comparison process. Let $\tilde{A}_n$ be the collection of all
low density boxes of the contact process on $\mathbb{Z}^2/L$ 
at time $n$. The process initially has all sites occupied, so that $\tilde{A}_0 = \varnothing$. 

Let $\psi: \mathbb{R} \to \mathbb{R}$ be a function with the following 
properties (see Figure~\ref{fig:1} for an example):
\begin{enumerate}[i)]
\item $\psi$ is continuous,
\item $\psi$ is nonincreasing,
\item $\psi(-\infty) \in (\rho_u,\rho_s)$ and $\psi(s) = 0$ for $s \geq 0$.
\end{enumerate}
(We consider $s\in\mathbb{R}$ as just a real variable here, with no connection to the time $s\in[0,\infty)$ in the previous section.)

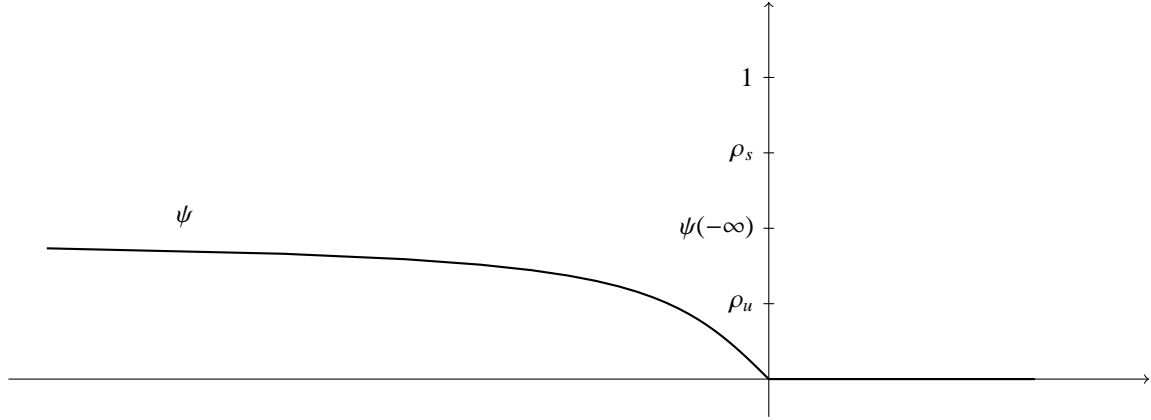
\begin{figure}
 \begin{tikzpicture}[domain=-9:0] 
    \draw[->] (-10,0) -- (5,0) node[right] {}; 
    \draw[->] (0,-0.5) -- (0,5) node[above] {};
    \draw[color=black, thick,rotate=90,scale=1.2]   plot  [domain=0:46*pi/100] (\x,{tan(\x r)})  (1.8cm,6.2cm)  node[left] {$\psi$};
     \draw[color=black, thick]   plot  [domain=0:3.5] (\x,0);  
   \draw (2pt,1 cm) -- (-2pt,1 cm) node[anchor=east,fill=white] {$\rho_u$};
 \draw (2pt,2 cm) -- (-2pt,2 cm) node[anchor=east,fill=white] {$\psi(-\infty)$};
  \draw (2pt,3 cm) -- (-2pt,3 cm) node[anchor=east,fill=white] {$\rho_s$};
   \draw (2pt,4 cm) -- (-2pt,4 cm) node[anchor=east,fill=white] {$1$};
  \end{tikzpicture}
     \caption[]{$\psi$ must be continuous, nonincreasing, and $\psi(-\infty\in (\rho_u,\rho_s)$. 
     With these conditions satisfied, $c^*(\xi)$ is independent of the choice of $\psi$ \citep[Section~5]{W82}.}
     \label{fig:1}
\end{figure} 

As in \cite[Section~5]{W82}, for $c \in \mathbb{R}$ and $\xi \in S^1$, the unit circle in $\mathbb{R}^2$,
we consider the operator on continuous functions from 
$\mathbb{R}$ to $\mathbb{R}$
$$
R_{c,\xi}[f](s) := \max\{\psi(s),Q[f(x\cdot\xi+s+c)](0,0)\},\quad s\in\mathbb{R},
$$
where $c, s,$ and $\xi$ are held fixed and $f$ is considered as a function of $x\in\mathbb{R}^2$.
Let $f_0 = \psi$, and $$f_{n+1} = R_{c,\xi}[f_n],\quad n=0,1,\ldots.$$
By \cite[Lemma~5.1]{W82}, for each $s \in\mathbb{R}$, the sequence $\{f_n(s)\}$ is 
nondecreasing in $n$, and, for each $n=0,1,\ldots$, the function $f_n$ is nonincreasing 
in its argument. Moreover, $\{f_n\}$ increases to a limiting function $f$ 
with $f(-\infty) = \rho_s$.  If 
$c<c^*$, where $c^*$ is the wave speed in the direction $\xi$, then $f(\infty) = \rho_s$. 
Indeed, $c^*$ is independent of the initial choice of $\psi$ satisfying the above conditions.  
Thus, if $c<c^*$, $$f\equiv \rho_s$$
(see \cite[Section~5]{W82}). Let
\begin{equation*}\label{eq:defc}
c \in (0, \min\{\alpha_i/2: i=1,2,3 \}),
\end{equation*}
and note that $c>0$.
For each of the directions $\xi_i$ with positive wave speed $\alpha_i$, $i=1,2,3$, 
consider the sequence $\{f_{n,i}\}$ where $f_{0,i} = \psi$ 
and $$f_{n+1,i} = R_{c,\xi_i}[f_{n,i}],\quad n=0,1,\ldots.$$

\begin{figure}[b]
 \begin{tikzpicture}[domain=-9:0] 
    \draw[->] (-10,0) -- (5,0) node[right] {}; 
    \draw[->] (0,-0.5) -- (0,5) node[above] {};
       \draw (2pt,0.925 cm) -- (-2pt,0.925 cm) node[anchor=east,fill=white] {$\rho_u$};
 \draw (2pt,1.85 cm) -- (-2pt,1.85 cm) node[anchor=east,fill=white] {};
  \draw (2pt,2.775 cm) -- (-2pt,2.775 cm) node[anchor=east,fill=white] {$\rho_s$};
   \draw (2pt,3.7 cm) -- (-2pt,3.7 cm) node[anchor=east,fill=white] {$1$};
     \draw (3.78,3pt) -- (3.78,-3 pt) node[below] {$M$};
          \draw (1,3pt) -- (1,-3 pt) node[below] {$m$};
    \draw[color=black, thick,dashed,rotate=90,scale=1.2]   plot  [domain=0:46*pi/100] (\x,{1.3*tan(\x r)-2})  (1.1cm,6.8cm)  node[left] {$\phi$};
        \draw[color=black, thick,rotate=90,scale=1.9]   plot  [domain=0:45*pi/100] (\x,{tan(\x r)-2})  (0.85cm,-1.7cm)  node[left] {$Q[\phi]$};
     \draw[color=black, thick,dashed]   plot  [domain=2.4:3.78] (\x,0);  
         \draw[color=black, thick]   plot  [domain=3.78:4.5] (\x,0);  
          \draw[color=black,dashed]   plot  [domain=-9.5:1] (\x,1.85);  
  \end{tikzpicture}
\end{figure}

For the coupling with the comparison process, the following will play an important role

\begin{lemma}
 There exists $n_0\in\mathbb{N}$ such that for all $n\geq n_0$ and for each $i = 1,2,3$,
 \begin{equation}\label{Translates}
  f_{n,i}(s-c) \leq Q\left[f_{n,i}\left(x\cdot\xi_i + s\right)\right](0,0)
 \end{equation}
\end{lemma}

\begin{proof}
 By definition,
 $$f_{n,i}(s-c) = R\left[f_{n-1,i}(s-c)\right] = max\left\{\psi(s-c),Q\left[f_{n-1,i}\left(x\cdot\xi_i +s\right)\right](0,0)\right\}$$
 Since $c<c^*_i$, by definition of $c^*_i$ (see \cite[Section~5]{W82}), for each $s\in\mathbb{R}$, 
 $$\lim_{n\to\infty}f_{n,i}(s) = \rho_s.$$
since $\psi$ is nonincreasing in $s$ and $\psi(-\infty) < \rho_s$, for some large enough $n_i\in\mathbb{N}$, 
we must have that $$\psi(s-c) \leq Q\left[f_{n_i-1,i}\left(x\cdot\xi_i +s\right)\right](0,0).$$
Thus for $n\geq n_i$,
$$f_{n,i}(s-c) = Q\left[f_{n-1,i}\left(x\cdot\xi_i +s\right)\right](0,0)\leq Q\left[f_{n,i}\left(x\cdot\xi_i +s\right)\right](0,0),$$
the inequality due to the fact that $f_{n,i}$ is nondecreasing in $n$ and $Q$ is monotone. We take $n_0 = \max_{i=1,2,3}n_i$.
 \end{proof}
\begin{lemma}
There exists $n_0\in\mathbb{N}$ and $c>0$ such that for all $n\geq n_0$ and for each $i = 1,2,3$, and 
there exists  $\psi : \mathbb{R}\to\mathbb{R}$, continuous, nonincreasing, with $\psi(-\infty) \in (\rho_u,\rho_s)$ 
and $\psi(s) = 0$ for $s \geq 0$, such that
\begin{equation}\label{TranslatesStrict}
  f_{n,i}(s-c) < Q\left[f_{n,i}\left(x\cdot\xi_i + s\right)\right](0,0)
 \end{equation}
 for $s\in\mathbb{R}$ with $f_{n,i}(s-c) > 0.$
\end{lemma}
\begin{proof}
Take $\psi : \mathbb{R}\to\mathbb{R}$ to be a continuous, strictly decreasing function with $\psi(-\infty) \in (\rho_u,\rho_s)$ 
and $\psi(s) = 0$ for $s \geq 0$. By induction, we will show that $f_{k,i}(s) = R\left[f_{k-1,i}(s)\right]$, with $f_{0,i}(s) = \psi(s)$ 
is also strictly decreasing for $s$ for which $f_{k,i}(s) > 0$  and for each $k=0,1,2,\ldots$. Suppose that $f_{k,i}(s)$ is strictly decreasing for 
$s$ for which $f_{k,i}(s) > 0$.
$$f_{k+1,i}(s) =  max\left\{\psi(s),Q\left[f_{k,i}\left(x\cdot\xi_i +s + c\right)\right](0,0)\right\}, $$
and $Q\left[f_{k,i}\left(x\cdot\xi_i +s + c\right)\right](0,0)$ is strictly decreasing in $s$ by \eqref{Monotone} since $f_{k,i}$ is 
strictly decreasing in $s$. Since  $f_{k+1,i}$ is the maximum of two strictly decreasing functions, it is also strictly decreasing.
Since \eqref{Translates} holds for any $c\in(0,\min\{\alpha_i/2: i=1,2,3 \})$ and both functions are strictly decreasing in $s$ as long as they are non-zero, 
for $c = \min\{\alpha_i/2: i=1,2,3 \}$, strict inequality must hold.
\end{proof}

Suppose $f_{n_0,i}$ satisfies \eqref{TranslatesStrict} for $s\in\mathbb{R}$ with $f_{n_0,i}(s-c) > 0.$
Now, set $\alpha = \min_i\left\{f_{n_0,i}(-\infty)\right\}$ and let
\begin{align*}
m = \min_{i}\sup\big\{s: Q[f_{n_0,i}(x\cdot\xi_i+s)](0,0) = \alpha\big\}
& & \text{ and } & & M = \max_i\inf\big\{s: Q[f_{n_0,i}(x\cdot\xi_i +s)](0,0) = 0 \big\}.
\end{align*}
and set $l := M - m$.

The coupling will take place in the following way.
If an {\it error} occurs in the contact process (the detailed description of what constitutes an error is below) 
such as a box $B^L(x)$ that is good at time $n-1$ becomes a low density box at time $n$, 
a triangular vacant region $\tilde{T}(x,n;n)$ will appear in the contact process, and
in the comparison 
process $B^L(x)$ is also covered by a corresponding triangular region, $T(x_*,n_*;n)$, $n_*\in[n,n+1)$ chosen 
uniformly at random, 
and $x_*\in\mathbb{R}^2$ chosen uniformly at random from $B^L(x)\subset\mathbb{R}^2$,
of an appropriate size, so that the center of the box is at the center of the triangle in the contact process
with inscribed circle of radius $$r = \lceil l + 2d(B) + 2c + d(k)\rceil,$$ initially 
at time $n$, where $d(B) = L^{-\gamma}\sqrt{2}$ is the diameter of the box $B_L(x)$ and 
$$d(k) = \sup \left\{|x-y|: x,y\in\mathbb{R}^2, k(x)\neq 0, k(y)\neq 0 \right\}$$ is 
the diameter of the kernel. 
In the comparison process, the corresponding triangle has the same center but inscribed 
circle of radius $$r_* := r-(n+1 - n_*).$$
Once such a region is created, the edges of $T(x_*,n_*;n)$ in the comparison process move inward, 
each with linear rate $c$. While the edges of $\tilde{T}(x,n;n)$ will have moved inward by $c$ units 
from time $n$ to $n+1$, since the contact process is in discrete time. The region 
in the comparison process
at time $t$, $t\geq n_*$ will be denoted by $T(x_*,n_*;t)$.

Set the \textit{interaction rate} $b$ to be 
$r$. In the comparison process, 
should two or more triangles collide or overlap, a new {\it collision} or {\it overlap} 
region is formed at that time, which is the intersection of the maximal collection of 
the colliding or overlapping regions that has nonempty intersection. The edges of the 
new collision or overlap region then move outward at rate $b$ in each direction until 
they have caught up to all of the respective edges of the regions that initiated the 
overlap or collision in each respective direction, and after that the edge of the 
overlap or collision region will again move inward with rate $c$. By construction, for integer times $m>n$, $\tilde{T}(x,n;m) = T(x_*,n_*;m)$.

\tikzstyle{buffer1} = [draw,shape border rotate=90, thick, red, dashed,isosceles triangle, node distance=1.7cm, minimum height=2.6em] 
\tikzstyle{buffer2} = [draw,shape border rotate=90, thick,isosceles triangle, node distance=2cm, minimum height=4em] 
\tikzstyle{buffer3} = [draw,shape border rotate=90, thick, blue, dashed,isosceles triangle, node distance=7cm, minimum height=10em]
\tikzstyle{buffer4} = [draw,shape border rotate=90, thick, red, dashed,isosceles triangle, node distance=2.9cm, minimum height=5.1em] 
\tikzstyle{buffer5} = [draw,shape border rotate=90, thick, red, dashed,isosceles triangle, node distance=2.8cm, minimum height=5em] 
\tikzstyle{buffer6} = [draw,shape border rotate=90, thick, isosceles triangle, node distance=3.5cm, minimum height=6.5em] 
\tikzstyle{buffer7} = [draw,shape border rotate=90, thick, isosceles triangle, node distance=3.4cm, minimum height=6.4em]  
  \begin{figure}[b]
 \begin{tikzpicture}
 \draw (0 cm,0 cm) node[buffer1]{}  node[buffer2]{};
 \draw (5cm,-2cm) node  [buffer3]{};
  \draw (5.83cm,-2.55cm) node [buffer6]{};
 \draw (4.6cm,-1.5cm) node [buffer7]{};
 \draw (5.85cm,-2.57cm) node [buffer4]{};
  \draw (4.6cm,-1.5cm) node [buffer5]{};
  \node at (9.07,0) {$t_0$};
  \draw (9.3,0) -- (9.9,0);
  \node at (9,-0.75) {$t$};
  \draw [dashed](9.3,-0.75) -- (9.9,-0.75);
  \end{tikzpicture}
 \caption{An overlap region is formed at some time $t_0$. 
 The edges of the overlap region move outward, and by time $t>t_0$, 
 they have caught up with the edges of the triangles initiating the overlap, 
 so all edges are now moving in. The overlap region at time $t$ is in blue. 
 The vacant triangular regions at time $t$ have become smaller and are in red.}
 \end{figure}
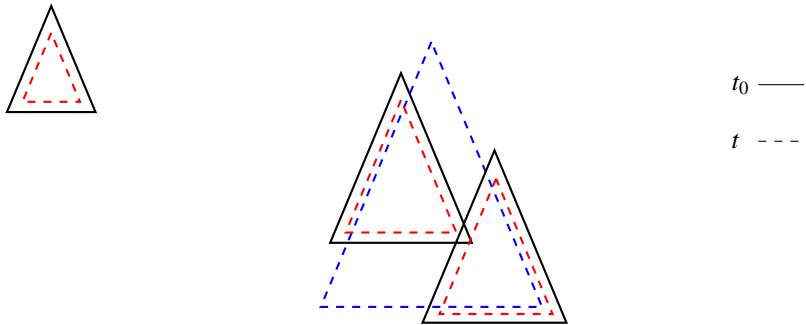

Bramson and Gray use a Poisson point process in their comparison model. 
We will consider a point process of errors, $\mathcal{P}$, 
derived from the quadratic contact process, which will be described below.
 Although $\mathcal{P}$ is not quite a Poisson point proccess, 
it shares two key properties, \eqref{P1} and \eqref{P2}, with a Poisson point process.
In fact, the results of Bramson and Gray still hold
with such an underlying point process as $\mathcal{P}$, satisfying \eqref{P1} and \eqref{P2},
  instead of the Poisson process.

\subsection{Type I and type II errors}
 
To describe $\mathcal{P}$, we will consider  
two different types of errors that are possible, type I and type II errors. 
Define
\begin{equation}\label{translates}
 h_i(s) = 
 \begin{cases}
\alpha & \text{for } s \leq -(r-d(k)-c) \\
f_{n_0,i}(s) & \text{for } s > -(r-d(k)-c)
\end{cases}
\end{equation}

When either error occurs at some $x\in\mathbb{R}^2$, 
a triangular region $\tilde{T^e}(x,n;n)$ is created with 
inscribed circle of radius $r$. The edges of $\tilde{T^e}(x,n;n)$ 
move inward with linear speed $c$ until the triangle closes and no longer exists.
 If $B(y; d(k))\notin \tilde{A}_{n}$, for each $k=0,1,\ldots,m$, where $B(y;r)$ represents the 
 closed ball of radius $r$ centered at $y$,
the expected density 
of $B^L(y)$ is greater than $Q^{k}(\alpha) > \alpha$, where $Q^k$ represents $k$ iterations 
of $Q$.
As long as no other error regions of $\tilde{A}_{n+k}$ overlap $\tilde{T^e}(x,n;n +k)$,
due to the low density box 
at $x$, the expected density of boxes inside $\tilde{T^e}(x,n;n +m)$ 
 is at least greater than 
$h_i(\xi_i\cdot(x-y) - mc)$ for each $i=1,2,3$, from \eqref{TranslatesStrict} and thus it is above $$\max_{i=1,2,3} h_i(\xi_i\cdot(x-y) - mc)$$
for $y\in\tilde{T^e}(x,n;n +m)$.
Note that each such triangle $\tilde{T}^e$ has a type of buffer region of 
good boxes with density $\geq \alpha$ inside of $\tilde{T}^e$ that is distance $d(k)$ 
away from the edges of the triangle, as long as there are no overlapping triangles. With inward 
moving edges, once the triangle has inscribed circle of radius $d(k)$ or less, all boxes in 
the triangle are good. Any box that intersects both $\tilde{T}^e$ and $\tilde{A}_{n-1}^c$ is 
expected to have density of particles $> \alpha$ at time $n-1$.
We now define the errors.
\begin{enumerate}[ Type I Error:]
\item Such an error occurs if the density of a 
box in $\tilde{A}^c_{n}$ suddenly falls below $\alpha$ at time $n$, but the density in that box 
and all boxes within $d(k)$ of that box was $\geq \alpha$ at time $n-1$.
Suppose that $$B^L(x)\cap \tilde{A}_{n-1} = \varnothing.$$ 
Thus, $B^L(x)$ is good at time $n-1$ and all other boxes within distance $d(k)$ 
are also good at $n-1$.  If $B^L(x)$ becomes 
a low-density box at time $n$, this spontaneous 
error will be considered a {\it Type I} error. Note that if two (or more) boxes 
become low-density boxes at time $n$ that are within distance $d(k)$, 
two (or more) Type I errors occur and the two (or more) triangles that are created 
at time $n$ will certainly overlap, thus giving rise to overlap regions.
\item Such errors may occur when 
there is (are) an already low-density box(es) in 
a region of the plane but with density above 
appropriate translates of $h_{i}$'s. That is, assume that 
$$B^L(x)\cap \tilde{A}_{n-1} \neq \varnothing.$$ 
\end{enumerate}
To motivate the definition of a type II error that will follow and explain \eqref{translates},
 note that if a type I error occurs at $x\in\mathbb{R}^2$ at time $k$, 
a triangular region $\tilde{T}^e(x,k;k)$ is formed. If $B^L(x)$ is the only low-
density box in $\tilde{T}^e(x,k;k)$, then for each $y\in \tilde{T}^e(x,k;n)$, the density of $B^L(y)$ is greater than $$h(y,k,n) :=\max_{i=1,2,3} h_i(\xi_i\cdot(x-y) - (n-k)c).$$
In the above case, when no overlap regions are present, 
a type II error occurs if $y\in\tilde{T}^e(x,k;n)$ and the density 
of $B^L(y)$ at time $n$ falls below $h(y,k,n)$.

For what follows, a triangular overlap region centered at $x\in\mathbb{R}^2$ created 
at time $k$ will be denoted $\tilde{T}^o(x,k;k)$. If a type II error occurs, an overlap region(s)
 will certainly be created.
If $B^L(y)$ is contained in more than one triangular region at time $n$,
 consider the maximal collection of triangular regions such that 
\begin{align*}
 B^L(y)\subset \bigcap_{j\in J}\tilde{T}_j(y_j,n_j;n),
\end{align*}
where $J$ is a countable index set and $T_j(y_j,n_j;n)$ is a triangular region centered at $y_j\in\mathbb{Z}^2/L$ that was created at time $n_j\in\mathbb{N}$ and can either be of the type $\tilde{T}^e$ or of the type $\tilde{T}^o.$

Then the results of Bramson and Gray imply that there exists a region $\tilde{T}(y,m;n)$ centered at $y$ and created at time $m$ such that
\begin{align*}\label{IntersectionTypeII}
 \bigcap_{j\in J}\tilde{T}_j(y_j,n_j;t) \subset \tilde{T}(z,m;t) \text{ for all times } t\in [m,n],
\end{align*}
and so $B^L(y)\subset \tilde{T}(z,m;n)$.
\begin{enumerate}[{Case} i:]
\item If $\bigcap_{j\in J}\tilde{T}_j(y_j,n_j;n)$ contains only regions 
of the type $\tilde{T}^o$ and no regions of the type $\tilde{T}^e$,
 no type II error occurs at $x$. This is because no ``recent'' error has occurred within $d(k)$ of $y$, 
 and all boxes have density $>\alpha$. So, if an error were to occur at $y$, it would be of type I.
\item Suppose $\bigcap_{j\in J}\tilde{T}_j(y_j,n_j;n)$ contains at least one region 
of the type $\tilde{T}^e$. In this case, a type II error may occur at $x$. 
\end{enumerate}
For $i=1,2,3$, let $H_{i,j}(n)$ be the halfspace in $\mathbb{R}^2$ containing $\tilde{T}(y_j,n_j;n)$ and with boundary containing the side of the triangle perpendicular to $\xi_i$. 

Furthermore, the $i^{th}$ edge and corresponding halfspace containing $\tilde{T}$ and its $i^{th}$ edge at time $t$, $H_i(t)$ either:
(i) moves outward with rate $b$, or
(ii) moves inward with rate $c$ and is such that 
$$
\bigcup_{j\in J}H_{i,j}(t) \subset H_i(t). 
$$
If $H_i(n)$ is moving outward with rate $b$ for each $i=1,2,3$, no type II error occurs 
at $x$. 
If $H_i(n)$ is \emph{not} moving outward with rate $b$ for at least one of  $i=1,2,3$,
 let $E\subset\{1,2,3\}$ be the set of directions $i$ for which $H_i(n)$ is \emph{not} moving outward with rate $b$.
 Let 
$$h_{E,x}(n):=\max_{e\in E}\inf_{j\in J} h_{e}(\xi_e\cdot(x - y_j) - (n-m_{j,e})c),$$
where we define $m_{j,e}$ as follows: 
\begin{enumerate}[i)]
\item $n_j$, the time that the region $\tilde{T}(y_j,n_j;n)$ is created,  if $\tilde{T}(y_j,n_j;n)$ is a region of the form $\tilde{T}^e$,
\item the least integer greater than or equal to the time when $H_e(y_j,n_j,n)$ is not moving outward, if $\tilde{T}(y_j,n_j;n)$ is a region of the form $\tilde{T}^o$.
\end{enumerate}
A type II error occurs 
at $x$ at time $n$ if the density of $B^L(x)$ falls below $h_{E,x}(n)$. Note that in a given box $B^L(x)$, only one error may occur at a particular time $n$, type I or type II, not both.

\subsection{Probability of an error}
First consider an upper bound on the probability that a type I error occurs. Let $x \in \mathbb{Z}^2/L$ and $B^{L}(x)$ the box containing $x$. Then if the box is good, the total number of particles in it is $$S_{n-1}^{L}(x) > \alpha \cdot m,$$ where $m = L^{2-2\gamma}$ is the number of sites in $B(x)$.
From previous calculations,
\[ \mathbb{E}\left( S_{n}^{L}(x) \right) = \sum_{y \in B^{L}(x)} Q\big[u_{n-1}^{L}\big](y) \]  
and
\begin{align*}\mathbb{E}&\left[ S_{n}^{L}(x)\: \big\vert\: B(x) \text{ is good at time } n-1 \right] =  \sum_{y \in B^{L}(x)} Q\big[u_{n-1}^{L}\big](y) \geq Q\left(\alpha\right) \cdot m > \alpha\cdot m \end{align*}
We also have $$\text{Var}\left( S_{n}^{L}(x) \right) \leq C\cdot m.$$ 
Now fix any $\delta_1 \in (0,Q(\alpha) - \alpha)$.
By Chebyshev's inequality, 
\begin{align*}
P_{\hat{\xi}_{n-1}^{L}}\left(\left| S_{n}^{L}(x) - \sum_{y \in B^{L}(x)} Q(u_{n-1}^{L})(y)\right| \geq \delta_1 m \right) \leq \frac{Cm}{\delta_1^2 m^2} = \frac{C_1}{m} = C_1L^{2\gamma -2} := \epsilon_1
\end{align*}
This gives an upper bound on the probability of a good box spontaneously going bad, for a good box has density of particles $> \alpha$ and is therefore expected at the next time step to have density of particles $> Q(\alpha)$, so $$Q(\alpha) - \delta_1 > \alpha.$$ As $L\to\infty$, $\epsilon_1\to 0$.

For an upper bound on the probability of a type II error, choose $\delta_2 = \min_{i=1,2,3}\delta_{2,i} > 0$, with
$$\delta_{2,i}\in(0,\inf_{s\in[r,0]}\{Q[f_{n,i}\left(x\cdot\xi_i + s + c\right)](0,0)-h_i(s)\}).$$ 
By \eqref{TranslatesStrict}, $$\inf_{s\in[r,0]}\{Q[f_{n,i}\left(x\cdot\xi_i + s + c\right)](0,0)-h_i(s)\} > 0.$$
By Chebyshev's inequality, 
\begin{align*}
P_{\hat{\xi}_{n-1}^{L}}\left(\left| S_{n}^{L}(x) - \sum_{y \in B^{L}(x)} Q(u_{n-1}^{L})(y)\right| \geq \delta_2 m \right) \leq \frac{Cm}{\delta_2^2 m^2} = \frac{C_2}{m} = C_2L^{2\gamma -2} := \epsilon_2
\end{align*}
Thus, given the configuration at time $n-1$, the probability of an error occurring at time $n$, type I or type II, is bounded above by 
$\epsilon = \max\{\epsilon_1,\epsilon_2\}$,
with $\epsilon \to 0$ as $L\to\infty$.

\subsection{Point process of errors}

Next, we describe the point process, $\mathcal{P}$ on $\mathbb{R}^2\times[0,\infty)$ 
used in the comparison process. It is derived from the quadratic contact process 
on $\mathbb{Z}^2/L$. For each type I or type II error that occurs in the contact 
process, there is a single corresponding point in $\mathcal{P}$. If the error 
occurs in $B^L(x)$ at time $n$, let $(y,t)\in\mathcal{P}$ where 
$y\in x^* + [0,L^{-\gamma})^2$ is a single point in the box, chosen 
uniformly and at random from $x^* + [0,L^{-\gamma})^2$ and $t$ is 
chosen uniformly and at random from $[n,n+1)$.

 Although $\mathcal{P}$ is not quite a Poisson point process, it 
 shares two key properties with the Poisson process, the only two 
 used in Bramson and Gray's proof, that are sufficient to demonstrate 
 a nontrivial stationary distribution for large enough $L$. 
 Namely (see \citep[2-1 and 2-2]{BG-91}),
\begin{align}\label{P1}
 P\left(\mid B\cap\mathcal{P} \mid \geq 2 \right) = O\left(\lambda(B)^2 \right)
\end{align}
as $\lambda(B)\to 0$, where $\lambda$ is Lebesgue measure on $\mathbb{R}^2\times[0,\infty)$, for Borel sets $B$.
\eqref{P1} is satisfied by the quadratic contact error process $\mathcal{P}$, since at most one error can 
occur in $B^L(x)$ for any $x\in\mathbb{Z}^2/L$.

The second property is that for all small enough disjoint cubes $B_1,B_2,\ldots,B_m$ in $\mathbb{R}^2\times[0,\infty)$,
\begin{align}\label{P2}
 P\left(\bigcap_{j=1}^{m}\{B_j\cap\mathcal{P}\neq\varnothing\}\right) \leq \prod_{j=1}^{m} \left(2\epsilon L^{2\gamma}\lambda(B_j)\right).
\end{align}
If $B_j = b_j\times[s_j,t_j]$, where $b_j$ is a cube in $\mathbb{R}^2$, it is sufficient to assume that $$\lambda(b_j) < L^{-2\gamma}$$ for each $j$ and $t_j-s_j < 1$ for $j=1,2,\ldots,m$ (the time interval may also be open or half open) and that $b_j\subset\mathbb{R}^2$ overlaps at only one box $B^L(x)$. If $b_j$ overlaps more than one box $B^L(x)$, it can be split into a disjoint union of cubes each overlapping just one box. To see that \eqref{P2} is then satisfied, first consider the case when $n\in(t_j-s_j)$ for all $j=1,\ldots,m,$ and some $n\in\mathbb{N}$. 

Given the configuration of the quadratic contact process at time $n-1$, births at different sites at time $n$ are independent of each other. The same holds for deaths. For each $j$, $$P\left(B_j\cap\mathcal{P}\neq\varnothing\right) \leq \epsilon L^{2\gamma} \lambda(B_j).$$ Since the densities of particles in different boxes are independent, \eqref{P2} holds.

We still assume that  $$B_j = b_j\times[s_j,t_j],$$ where $b_j$ is a cube in $\mathbb{R}^2$, $\lambda(b_j) < L^{-2\gamma}$ for each $j$ and $t_j-s_j < 1$. If $n_j\in(t_j-s_j)$ for some $n_j\in\mathbb{N}$, split $B_j$ into two cubes: $$B_{j,1} = b_j\times[s_j,n_j)\quad \text{and}\quad B_{j,2} = b_j\times[n_j,t_j].$$ We note that 
\begin{align*}
 P\left(\{B_{j,1}\cap\mathcal{P}\neq\varnothing\}\cap\{B_{j,2}\cap\mathcal{P}\neq\varnothing\}\right) \leq 2\epsilon L^{2\gamma}\max\{\lambda(B_{j,1}),\lambda(B_{j,2})\} 
\end{align*}
So we can suppose this does not occur and let $n_j = \lfloor s_j \rfloor$. Also suppose that the cubes are ordered in a way that $$n_1\leq n_2\leq\ldots\leq n_m.$$ \eqref{P2} can be shown by induction on $m$. It clearly holds for $m=1$. Let $$A_j = \{B_j\cap\mathcal{P}\neq\varnothing\}.$$ Then
\begin{align*}
 P\left(\bigcap_{j=1}^{m} A_j\right) &= P\left(A_m \bigg| \bigcap_{j=1}^{m-1} A_j\right)P\left(\bigcap_{j=1}^{m-1} A_j\right)\leq \epsilon L^{2\gamma}\lambda(B_m)\prod_{j=1}^{m-1} \left(2\epsilon L^{2\gamma}\lambda(B_j)\right).
\end{align*}
The second factor comes from the induction assumption and the first factor is from the upper bound on the error probability.

\subsection{Comparison Result}

\begin{theorem}\label{ComparisonThm}
Let $\tilde{A}_n$ be the bad region of the contact process at time $n$ with death probability $\eta$, birth probability $\beta$, and finite offspring distribution kernel $k$ such that there is at least one direction with a positive wave speed. 

Let $A_t$ be the vacant region of the comparison process with point process $\mathcal{P}$, orientation vectors $n_i = \xi_i$, speeds $a_i = c$, and interaction rate $b = r$. Then the processes $A_n$ and $\tilde{A}_n$ can be jointly coupled so that 
\begin{align*}
\tilde{A}_n \subset A_n
\end{align*}
for all $n\in\{0,1,2,\ldots \}$.
Hence, for large enough $L$, the quadratic contact process has a nontrivial stationary distribution.
\end{theorem}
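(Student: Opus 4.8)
\medskip\noindent\textbf{Proof sketch.}
The plan is to run the Bramson--Gray comparison machinery with the bad region $\tilde{A}_n$ of the quadratic contact process in the role of the vacant region of their generic population process, using the functions $h_n$ built from Weinberger's iteration to control the particle density outside $\tilde{A}_n$. There are three parts: (a) a quantitative one-step error estimate showing that $\mathcal{P}$ has error rate $\epsilon$ with $\epsilon L^{2\gamma}\to 0$; (b) an inductive coupling giving $\tilde{A}_n\subset A_n$ for all $n$; (c) an appeal to Bramson and Gray's theorem for $A_t$ followed by a monotonicity argument for the quadratic contact process.

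For (a) I would reuse the variance bound from the proof of Theorem~\ref{HydrodLimit}, working with the modified process $\hat{\xi}^L$ (legitimate by Lemma~\ref{Claim}). Conditioning on the configuration at time $n-1$, distinct sites become occupied independently, so the box density at time $n$ has conditional variance $\le C/L^{2-2\gamma}$ and conditional mean (approximately) $Q$ applied to the local averages at time $n-1$. Since $\phi$ was chosen so that $Q[\phi(x\cdot\xi_i)]$ dominates $\phi$ translated by $c$, the iterates $Q^{k}[\phi(x\cdot\xi_i)]$ sit a fixed amount above the profile predicted for the next time step, away from their transition zones; Chebyshev's inequality then bounds the conditional probability of a type I or type II error in a single box by $C/(\delta^2 L^{2-2\gamma})=:\epsilon$. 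As $\gamma\in(0,\tfrac12)$ we get $\epsilon L^{2\gamma}\to 0$, and \eqref{P1}--\eqref{P2} follow exactly as in the subsection on the point process of errors.

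For (b) I would induct on $n$, the base case being $\tilde{A}_0=A_0=\varnothing$. Assume $\tilde{A}_{n-1}\subset A_{n-1}$ and let $B^L(x)$ be bad at time $n$. If $B^L(x)\cap\tilde{A}_{n-1}=\varnothing$, then $B^L(x)$ and every box within $d(k)$ of it (by the $d(k)$ buffer built into the triangular regions) were good at time $n-1$, so a type I error has occurred; the corresponding point of $\mathcal{P}$ creates $D(x,n,\cdot)$, whose inscribed circle, after shrinking at rate $c$ for time $\le 1$, still has radius $\ge l+d(B)+d(k)$ and hence covers $B^L(x)$ at time $n$. If instead $B^L(x)\cap\tilde{A}_{n-1}\neq\varnothing$, let $R(y,m,\cdot)$ be the Bramson--Gray envelope of the maximal family of regions whose intersection contains $B^L(x)$ at time $n-1$, so that $B^L(x)\subset R(y,m,n-1)\subset A_{n-1}$. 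If the density in $B^L(x)$ at time $n$ stays above $h_n(x)$, then the deterministic domination, the fact that $\partial R$ recedes at rate exactly $c$ while $Q^{n-m}[\phi]$ heals at rate $c$ off its transition zone of width $l$, the spatial range $\le d(k)$ of a birth, and the envelope inclusions $\bigcup_i H_{i,j}(t)\subset H_j(t)$ together force $B^L(x)\subset R(y,m,n)\subset A_n$; otherwise a type II error has occurred and the new point of $\mathcal{P}$ produces a region covering $B^L(x)$. In every case $B^L(x)\subset A_n$, so $\tilde{A}_n\subset A_n$.

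For (c), since $\mathcal{P}$ satisfies \eqref{P1}--\eqref{P2} with error parameter $\epsilon L^{2\gamma}\to 0$, for $L$ large this parameter falls below the threshold in Bramson and Gray's theorem for the generic population process with positive inward speeds $a_i=c$ and interaction rate $b=2d(k)$; hence $A_t$ has a nontrivial stationary distribution and its vacant region has density bounded away from $1$. By the coupling $\tilde{A}_n\subset A_n$, the bad region of the quadratic contact process started from all $1$'s has density bounded away from $1$ uniformly in $n$, so it retains a positive density of good boxes, hence of occupied sites; since that process decreases (by attractiveness) to the upper invariant measure, that measure is nontrivial. I expect the main obstacle to be step (b): one must verify that Bramson and Gray's geometric bookkeeping for collisions and overlaps still closes when the state outside the vacant region is no longer identically all $1$'s but the spatially varying profile carried by the $Q^{n-m_i}[\phi(\xi_j\cdot(\,\cdot\,-y_i))]$ — that is, that the choices $r=\lceil l+d(B)+c+d(k)\rceil$, $a_i=c=\min_i\alpha_i/2<\alpha_i=c^*$, and $b=2d(k)$ are exactly what is needed to make $h_n$ dominate the density at every non-error box.
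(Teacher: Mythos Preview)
Your proposal is correct and follows essentially the same route as the paper: an induction on $n$ with base case $\tilde A_0=A_0=\varnothing$, Chebyshev plus the $\mathrm{Var}\le C/L^{2-2\gamma}$ bound from the hydrodynamic limit to get per-box error probability $\epsilon=CL^{2\gamma-2}$, the same case split into type I and type II errors for the inductive step, and the final appeal to Bramson--Gray once $\epsilon L^{2\gamma}\to 0$. Your write-up is in fact more explicit than the paper's in part (b) (the paper simply asserts that ``it follows directly from our definitions of the parameters that $\tilde A_n\subset A_n$'') and in part (c) (the paper does not spell out the attractiveness/upper-invariant-measure step), and your closing caveat about the geometric bookkeeping for overlaps is exactly the point the paper handles most briefly.
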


\begin{proof}

The two processes are coupled in the following way: if in the contact process, an error occurs at time $n$ in some box, place a single point in the cube $$Q(x) = B^L(x)\times [n,n+1) \in \mathbb{R}^2 \times [0,\infty)$$  uniformly at random. If no error occurs, leave the corresponding box empty in $\mathcal{P}$. 
The rates have been set up so that all boxes with density $<\alpha$ in the contact process are covered by some triangular region. A triangular region automatically covers type I and type II errors. All other low density boxes can only be in the vicinity of the type I and II errors: within distance $d(K)$ times the age of the error. The interaction rate $b$ ensures that if there is a large cluster of errors, the overlap/collision region grows faster than the surrounding bad boxes can spread (only by $d(k)$ units per single time step), and from the perimeter of the bad regions, the positive wave speeds ``propagate'' the high density boxes into the former bad regions. 

We use an induction argument. Run the contact process from an initial configuration with every site occupied: $\tilde{A}_0 \subset A_0$ since both processes begin with empty vacant region. Now, assuming $\tilde{A}_{n-1} \subset A_{n-1}$, we show that after one time step, we still have $\tilde{A}_n \subset A_n$, which will follow from our earlier definitions of the errors and parameters. 

Suppose that $x\in\tilde{A}_n$. This means that 
the density of particles in $B^L(x)$ at time $n$ is less than $\alpha$. If also $x\notin\tilde{A}_{n-1}$, all boxes within $d(k)$ of $x$ are good at time $n-1$, since any triangular region $\tilde{T}(y,m;n-1)$ has a ``buffer'' region within 
$d(k)$ of each edge in which the density of particles is expected to be $> \alpha$, by construction. Thus, for an error to occur at time $n$ in $B^L(x)$, 
it must be a type I error, and $B^L(x)$ is covered by a triangular vacant region centered at a uniformly selected point in $Q(x)$ in the comparison process, so $B^L(x)\in A_n$.

If $x\in\tilde{A}_n$ and $x\in\tilde{A}_{n-1}$, there are two possibilities:  
\begin{enumerate}[(i)]
\item\label{en:1} either the density in $B^L(x)$ at time $n-1$ was less than $\alpha$, or 
\item\label{en:2} the density in $B^L(x)$ at time $n-1$ was greater than or equal to $\alpha$.
\end{enumerate}
If~\eqref{en:1} holds, then $B(x;d(k)+c)\subset A_{n-1}$, since by definition of the errors, if the density of a box is below $\alpha$, it is 
contained in a triangular vacant region and so is the ball around it of radius at least $d(k) + c$. Thus, since a triangular vacant region shrinks by at most $c$ units in any direction in one time unit, $B^L(x)\subset\tilde{A}_{n}\subset A_{n}$.

If~\eqref{en:2} holds and if $x$ is within $d(k)$ of the nonvacant region at time $n-1$, at this distance by construction the expected density of $B^L(x)$ is $>\alpha$.
 Then a type II error occurs at $x$ at time $n$, so $B^L(x)\subset \tilde{A}_n\subset A_n$. If $x$ is not within $d(k)$ of the nonvacant region at time $n-1$, the density of $B^L(x)$ is either above $h_{E,x}(n-1)$ and above $h_{E,x}(n)$, in which case $x\in A_n$, or it is above $h_{E,x}(n-1)$ and below $h_{E,x}(n)$, in which case a type II error occurs at $x$.

 The comparison process $A_n$ has a nontrivial stationary distribution. Hence, so does the contact process.
\end{proof}


\bibliographystyle{elsarticle-harv}
\bibliography{research}

\begin{thebibliography}{25}
\expandafter\ifx\csname natexlab\endcsname\relax\def\natexlab#1{#1}\fi
\expandafter\ifx\csname url\endcsname\relax
  \def\url#1{\texttt{#1}}\fi
\expandafter\ifx\csname urlprefix\endcsname\relax\def\urlprefix{URL }\fi

\bibitem[{Bennett and Grinstein(1985)}]{BeGr-85}
Bennett, C., Grinstein, G., 1985. Role of irreversibility in stabilizing
  complex and nonergodic behavior in locally interacting systems. Phys. Rev.
  Lett. 55~(7).
\newline\urlprefix\url{http://dx.doi.org/10.1103/PhysRevLett.55.657}

\bibitem[{Bezuidenhout and Gray(1994)}]{BezGr-94}
Bezuidenhout, C., Gray, L., 1994. Critical attractive spin systems. Ann.
  Probab. 22~(3).
\newline\urlprefix\url{http://dx.doi.org/10.1214/aop/1176988599}

\bibitem[{Bramson and Gray(1991)}]{BG-91}
Bramson, M., Gray, L., 1991. A useful renormalization argument. In: Durrett,
  R., Kesten, H. (Eds.), Random walks, Brownian motion, and interacting
  particle systems. Festshrift in Honor of Frank Spitzer. Birkh{\"a}user,
  Boston, pp. 113--152.
\newline\urlprefix\url{http://dx.doi.org/10.1007/978-1-4612-0459-6_6}

\bibitem[{Chatterjee and Durrett(2013)}]{CD-13}
Chatterjee, S., Durrett, R., 2013. A first order phase transition in the
  threshold $\theta \ge 2$ contact process on random $r$-regular graphs and
  $r$-trees. Stochastic Process. Appl. 123~(2), 561--578.
\newline\urlprefix\url{http://dx.doi.org/10.1016/j.spa.2012.10.001}

\bibitem[{Chen(1992)}]{Chen-92}
Chen, H.-N., 1992. On the stability of a population growth model with sexual
  reproduction on $\mathbb{Z}^2$. Ann. Probab. 20, 232--285.
\newline\urlprefix\url{http://www.jstor.org/stable/2244555}

\bibitem[{Chen(1994)}]{Chen-94}
Chen, H.-N., 1994. On the stability of a population growth model with sexual
  reproduction on $\mathbb{Z}^d, d\geq 2$. Ann. Probab. 22, 1195--1226.
\newline\urlprefix\url{http://www.jstor.org/stable/2245021}

\bibitem[{Cox et~al.(2013)Cox, Durrett, and Perkins}]{CDP}
Cox, J., Durrett, R., Perkins, E., 2013. Voter model perturbations and reaction
  diffusion equations. Vol. 349 of Ast\'erisque.
\newline\urlprefix\url{http://arxiv.org/abs/1103.1676}

\bibitem[{Durrett(1986)}]{D-85}
Durrett, R., 1986. Some peculiar properties of a particle system with sexual
  reproduction. In: Tautu, P. (Ed.), Stochastic Spatial Processes. Vol. 1212 of
  Lecture Notes in Math. Springer, pp. 106--111.
\newline\urlprefix\url{http://dx.doi.org/10.1007/BFb0076241}

\bibitem[{Durrett(1995)}]{D-95}
Durrett, R., 1995. Ten lectures on particle systems. In: St. Flour Lecture
  Notes. Vol. 1608 of Lecture Notes in Math. Springer-Verlag, New York, pp.
  97--201.
\newline\urlprefix\url{http://dx.doi.org/10.1007/BFb0095747}

\bibitem[{Durrett and Gray(1985)}]{DuGr-85}
Durrett, R., Gray, L., 1985. Some peculiar properties of a particle system with
  sexual reproduction, unpublished manuscript.

\bibitem[{Durrett and Neuhauser(1994)}]{DuNe-94}
Durrett, R., Neuhauser, C., 1994. Particle systems and reaction diffusion
  equations. Ann. Probab. 22~(1), 289--333.
\newline\urlprefix\url{http://dx.doi.org/10.1214/aop/1176988861}

\bibitem[{Guo et~al.(2008)Guo, Evans, and Liu}]{Getal1}
Guo, X., Evans, J., Liu, D., 2008. Generic two-phase coexistence, relaxation
  kinetics, and interface propagation in the quadratic contact process:
  {A}nalytic sutides. Phys. A: Statistical Mechanics and its Applications
  387~(1), 177--201.
\newline\urlprefix\url{http://dx.doi.org/10.1016/j.physa.2007.09.002}

\bibitem[{Guo et~al.(2007)Guo, Liu, and Evans}]{Getal2}
Guo, X., Liu, D., Evans, J., 2007. Generic two-phase coexistence, relaxation
  kinetics, and interface propagation in the quadratic contact process:
  {S}imulation studies. Phys. Rev. E 75~(6), 0611129.
\newline\urlprefix\url{http://dx.doi.org/10.1103/PhysRevE.75.061129}

\bibitem[{Guo et~al.(2009)Guo, Liu, and Evans}]{Getal3}
Guo, X., Liu, D., Evans, J., 2009. Schl{\"o}gl's second model for autocatlaysis
  with particle diffusion: lattice-gas realization exhibiting generic two-phase
  coexistence. J. of Chem. Phys. 130, 074106.
\newline\urlprefix\url{http://dx.doi.org/10.1063/1.3074308}

\bibitem[{Harris(1974)}]{Harris-74}
Harris, T.~E., 1974. Contact interactions on a lattice. Ann. Probab. 2,
  969--988.
\newline\urlprefix\url{http://dx.doi.org/10.1214/aop/1176996493}

\bibitem[{He et~al.(1990)He, Jayaprakash, and Grinstein}]{HJG-90}
He, Y., Jayaprakash, C., Grinstein, G., 1990. Generic nonergodic behavior in
  locally interacting continuous systems. Phys. Rev. A 42, 3348--3355.
\newline\urlprefix\url{http://dx.doi.org/10.1103/PhysRevA.42.3348}

\bibitem[{Liggett(1999)}]{Liggett-99}
Liggett, T.~M., 1999. Stochastic Interacting Systems: Contact, Voter, and
  Exclusion Processes. Springer.
\newline\urlprefix\url{http://dx.doi.org/10.1007/978-3-662-03990-8}

\bibitem[{Liu(2009)}]{Getal4}
Liu, D., 2009. Generic two-phase coexistence and nonequilibrium criticality in
  a lattice version of {S}chl{\"o}gl's second model for autocatlysis. J. Stat.
  Phys. 135~(1), 77--85.
\newline\urlprefix\url{http://dx.doi.org/10.1007/s10955-009-9708-2}

\bibitem[{Liu et~al.(2007)Liu, Guo, and Evans}]{Getal5}
Liu, D., Guo, X., Evans, J., 2007. Quadratic contact process: Phase separation
  with interface-orientation-dependent equistability. Phys. Rev. Lett. 98,
  050601.
\newline\urlprefix\url{http://dx.doi.org/10.1103/PhysRevLett.98.050601}

\bibitem[{Mu{\~n}oz et~al.(2005)Mu{\~n}oz, de~los Santos, and Telo~da
  Gama}]{MSTG-05}
Mu{\~n}oz, M., de~los Santos, F., Telo~da Gama, M., 2005. Generic two-phase
  coexistence in nonequilibrium systems. Eur. Phys. J. B -- Condensed Matter
  and Complex Systems 43, 73--79.
\newline\urlprefix\url{http://dx.doi.org/10.1140/epjb/e2005-00029-3}

\bibitem[{Neuhauser(1994)}]{N-94}
Neuhauser, C., 1994. A long range sexual reproduction process. Stochastic
  Process. Appl. 53~(2), 193--220.
\newline\urlprefix\url{http://dx.doi.org/10.1016/0304-4149(94)90063-9}

\bibitem[{Toom(1974)}]{Toom-74}
Toom, A., 1974. Nonergodic systems of automata. Probl. Inform. Transm. 10~(3),
  239--246.

\bibitem[{Toom(1980)}]{Toom-80}
Toom, A., 1980. Stable and attractive trajectories in multicomponent systems.
  In: Multicomponent Random Systems. Vol.~6 of Advances in Probability and
  Related Topics. Dekker, New York, pp. 549--575.

\bibitem[{Varghese and Durrett(2013)}]{VD-13}
Varghese, C., Durrett, R., 2013. Phase transition in the quadratic contact
  process on complex networks. Phys. Rev. E 87, 062819.
\newline\urlprefix\url{http://dx.doi.org/10.1103/PhysRevE.87.062819}

\bibitem[{Weinberger(1982)}]{W82}
Weinberger, H., 1982. Long-time behavior of a class of biological models. SIAM
  J. Math. Anal. 13~(3), 353--396.
\newline\urlprefix\url{http://dx.doi.org/10.1137/0513028}

\end{thebibliography}
\end{document}